\numberwithin{figure}{section}
\numberwithin{equation}{section}
\newtheorem{theorem}{Theorem}[section]
\newtheorem{lemma}[theorem]{Lemma}
\newtheorem{corollary}[theorem]{Corollary}
\let\olddefinition\definition
\renewcommand{\definition}{\olddefinition\normalfont}
\let\oldremark\remark
\renewcommand{\remark}{\oldremark\normalfont}
\newcommand{\A}{\mathbf{A}}
\newcommand{\C}{\mathbb{C}}
\newcommand{\F}{\mathcal{F}}
\renewcommand{\H}{\mathcal{H}}
\renewcommand{\L}{\mathbf{L}}
\newcommand{\N}{\mathbb{N}}
\renewcommand{\O}{\mathcal{O}}
\renewcommand{\P}{\mathcal{P}}
\newcommand{\R}{\mathbb{R}}
\newcommand{\T}{\mathbb{T}}
\newcommand{\Z}{\mathbb{Z}}
\newcommand{\diff}{\,\mathrm{d}}
\DeclareMathOperator{\hilbert}{\mathbf{H}}
\DeclareMathOperator{\sgn}{\mathrm{sgn}}
\DeclareMathOperator{\sech}{\mathrm{sech}}
\renewcommand{\vec}[1]{\mathbf{#1}}
\newcommand{\modDf}{|\partial_x|}
\newcommand{\Df}{\partial_x}
\newcommand{\Aop}{\mathbf{A}}
\newcommand{\Lop}{\L}
\newcommand{\Hi}{\mathbf{H}}
\newcommand{\Pbdry}{\partial\Omega}
\newcommand{\tf}{\mathbf{Q}}
\begin{document}\
\author{John K. Hunter}
\address{Department of Mathematics, University of California at Davis}
\email{jkhunter@ucdavis.edu}
\thanks{Supported by the NSF under grant number DMS-1616988}
\author{Jingyang Shu}
\address{Department of Mathematics, University of California at Davis}
\email{jyshu@ucdavis.edu}
\title[Fronts in the Surface Quasi-Geostrophic Equation]{Regularized and Approximate Equations for Sharp Fronts in the Surface Quasi-Geostrophic Equation and its Generalizations}
\begin{abstract}
We derive regularized contour dynamics equations for the motion of infinite sharp fronts in the two-dimensional
incompressible Euler, surface quasi-geostrophic (SQG), and generalized surface quasi-geostrophic (gSQG) equations. We derive
a cubic approximation of the contour dynamics equation and prove the short-time well-posedness of the
approximate equations for generalized surface quasi-geostrophic fronts and weak well-posedness for
surface quasi-geostrophic fronts.
\end{abstract}
\date{\today}

\maketitle

\section{Introduction}

The generalized surface quasi-geostrophic (gSQG) equation
\begin{subequations}\label{sqg}\begin{eqnarray}
& \theta_t + \vec{u} \cdot \nabla \theta = 0, \label{sqg1}\\
& \vec{u} = \nabla^\perp (-\Delta)^{-\alpha/2} \theta, \label{sqg2}
\end{eqnarray}\end{subequations}
is a transport equation  in two space dimensions for an active scalar field $\theta(\vec{x},t)$, where $\vec{x} =(x,y)$.
The divergence-free transport velocity $\vec{u}$ is determined nonlocally from $\theta$ by \eqref{sqg2}, where
$\nabla^\perp = (-\partial_y, \partial_x)$ is the perpendicular gradient, and $0<\alpha\le2$ is a parameter.
If $\alpha =2$, then \eqref{sqg1}--\eqref{sqg2} is the streamfunction-vorticity formulation of
the two-dimensional incompressible Euler equations \cite{Ma}, while if $\alpha = 1$, then \eqref{sqg1}--\eqref{sqg2}
is the SQG equation. The gSQG equation is a natural generalization of these cases.

The SQG equation is an approximate equation for
quasi-geostrophic flows confined near a surface \cite{La,Ped}.
It also provides a useful two-dimensional model for singularity formation
in the three-dimensional incompressible Euler equations \cite{CoMaTa94a, CoMaTa94b, MaTa}. For further analysis of the SQG equation, see \cite{BuShVi, Mar, Res} and the references cited there.

Since $\theta$ is advected by a velocity field $\vec{u}$, the gSQG equation is compatible with piecewise constant solutions
in which $\theta$ takes only two distinct values $\theta_{+}$, $\theta_-$, so that
\[
\theta(\vec{x},t) = \begin{cases}\theta_+ & \vec{x} \in \Omega(t),\\ \theta_- & \vec{x} \in \Omega^c(t),\end{cases}
\]
for some domain $\Omega(t) \subset \R^2$.
Under suitable assumptions, one may determine $\vec{u}(\cdot,t)$ from the domain $\Omega(t)$ and
obtain closed contour-dynamics equations for the boundary $\Pbdry(t)$, which moves with velocity $\vec{u}$.

We distinguish two particular types of domains:
\begin{itemize}
\item[1.] Patches, whose boundary is a smooth, simple, closed curve diffeomorphic to the circle $\T$.
\item[2.] Half-spaces, whose boundary  is a smooth, simple curve diffeomorphic to $\R$ that divides $\R^2$ into two half-spaces.
\end{itemize}

In the first case of a patch, one can take $\theta(\cdot,t) = \chi_{\Omega(t)}$ where $\Omega(t)$ is bounded, and then
$\vec{u} = \nabla^\perp G\ast \theta$, where $G$ is the Green's function for $(-\Delta)^{\alpha/2}$,
which is the two-dimensional Riesz potential of order $\alpha$ if $0<\alpha<2$ \cite{Zi,Stein}, or the Green's function
for the (negative) Laplacian if $\alpha=2$ . The convolution
converges since $\theta$ has compact support, and one obtains well-defined contour dynamics equations for the motion of the patch.

The vortex patch problem for the two-dimensional Euler equations has been studied extensively, and the boundary remains
smooth globally in time \cite{BeCo, Che,Che1,Ma}.
SQG and gSQG patches with $\alpha \in [1, 2)$ are analyzed in \cite{ CoCoGa, Gan},
where the local existence and uniqueness in Sobolev spaces
of solutions of a suitable parametric equation for the patch boundary is proved.
The formation of finite-time singularities in the boundary of an initially smooth SQG patch is an open question,
but  numerical solutions suggest that complicated, self-similar singularities can arise \cite{Dri}.
Singularities have been proved to occur for two gSQG patches with $\alpha$ sufficiently close to $2$
in the presence of a rigid boundary \cite{KiRyYaZl}.

In the second case of a half-space,  we refer to the boundary $\Pbdry(t)$ as a front, by which we
will always mean a sharp front across which $\theta$ is discontinuous. We will consider only
fronts that are a graph, located at
\begin{equation}\label{thetas}
y = \varphi(x,t),
\end{equation}
where $\varphi(\cdot,t) \colon \R \to \R$ is a smooth, bounded function. This assumption simplifies the evolution equations but
becomes invalid if the front breaks.

The front problem for vorticity discontinuities in the Euler equations is studied in \cite{BiHu,Ra}.
Local existence and uniqueness for spatially periodic SQG fronts is proved in \cite{Rod05}
for $C^\infty$-solutions by a Nash-Moser method,  and in \cite{FeRo11} for analytic solutions
by a Cauchy-Kowalewski method.  Almost sharp fronts are analyzed in \cite{CoFeRo, FeLuRo, FeRo12, FeRo15}, and
the global existence of Sobolev solutions for gSQG fronts with $0<\alpha<1$ that decay sufficiently rapidly as $|x|\to \infty$
is proved in \cite{CoGoIo}. If singularities do form in an SQG front, they would presumably differ from the ones
observed numerically in \cite{Dri} for an elliptical SQG patch; in those simulations, the patch forms a very thin
``neck'' which is not approximated by a half-space.

There is a difficulty in the formulation of contour dynamics for the half-space problem when $1\le \alpha \le 2$, which is
the main case of interest here.
The scalar field $\theta$ is not compactly supported, and the formal contour dynamics equations diverge
because the corresponding Green's function decays too slowly at infinity.
In this paper, we propose regularized contour dynamics equations for the motion of a front that
are obtained by introducing a large-distance cutoff parameter $\lambda$ in the contour dynamics
equations together with a suitable cutoff-dependent Galilean transformation with velocity $v(\lambda)$, where
$v(\lambda) \to \infty$ as $\lambda\to \infty$.

We show that the cut-off, Galilean transformed
contour dynamics equations have  a well-defined limit  as $\lambda\to \infty$, in which
the Galilean transformation removes a divergent constant from the velocity field $\vec{u}$.
The derivation is formal, in the sense that we do not attempt to show that solutions of the truncated equations
approach solutions of the regularized equations as $\lambda\to \infty$; rather, the goal is to formulate meaningful
contour dynamics equations for fronts that provide a starting point for further analysis.

After a normalization of the jump in $\theta$ across the front, the resulting equation for the displacement \eqref{thetas} of the front is
\begin{equation}\label{nonconseqn}
\varphi_t(x, t) + \int_\R \left[\varphi_x(x, t) - \varphi_x(x + \eta, t)\right] \biggl\{G(\eta)
- G\left(\sqrt{\eta^2 + \left[\varphi(x, t) - \varphi(x + \eta, t)\right]^2}\right)\biggr\} \diff{\eta} + \L \varphi_x(x, t) = 0,
\end{equation}
where
\begin{equation}
G(x) = \begin{cases} -\frac{1}{2\pi} \log |x| & \text{if $\alpha = 2$},
\\
1/|x|^{2-\alpha}  & \text{if $0<\alpha <2$}.
\end{cases}
\label{defG}
\end{equation}
The linear operator $\L$ in \eqref{nonconseqn} is given by
\begin{align*}
\Lop &=  \begin{cases}
\frac{1}{2}\modDf^{-1} & \text{if $\alpha = 2$ (Euler)},
\\
b_\alpha \modDf^{1-\alpha} & \text{if $0<\alpha < 1$ or $1 < \alpha < 2$},
\\
-2 \log \modDf & \text{if $\alpha = 1$ (SQG)},
\end{cases}
\end{align*}
where $\modDf = (-\Df^2)^{1/2}$ has symbol $|k|$, $\log \modDf$ has symbol $\log|k|$, and
\begin{equation}
b_\alpha =
2\sin \left(\frac{\pi\alpha}{2}\right) \Gamma(\alpha - 1).
\label{a-const}
\end{equation}
The integral in \eqref{nonconseqn} converges since
\[
G\left(\sqrt{\eta^2 + \abs{\varphi(x, t) - \varphi(x + \eta, t)}^2}\right) - G(\eta) = \O\left(\frac{1}{|\eta|^{4-\alpha}}\right)
\qquad \text{as $|\eta|\to \infty$}.
\]
Equation \eqref{nonconseqn} has the conservative form \eqref{conseqn} and the Hamiltonian form
\eqref{hameqn}. The equation also applies to spatially periodic solutions, when it can be written as \eqref{pernonconseqn}.
The explicit equations for Euler, SQG,  gSQG are written out in Sections~\ref{front:euler}, \ref{front:sqg}, \ref{front:gsqg},
respectively.

To study the small-amplitude dynamics of fronts, we approximate the nonlinear term in \eqref{nonconseqn} by
cubic terms, which gives the equation
\begin{align}
&\varphi_t + \frac{1}{2}\partial_x\left\{\varphi^2 \Aop \varphi - \varphi\Aop\varphi^2
+\frac{1}{3}\Aop\varphi^3\right\} + \Lop \varphi_x = 0,
\label{sqg_eq}
\end{align}
where the linear operator $\Aop$ is proportional to $\Df^2 \Lop$,
\begin{align*}
\Aop &= \begin{cases}
\frac{1}{2}\modDf & \text{if $\alpha = 2$ (Euler)},
\\
c_\alpha \modDf^{3-\alpha} & \text{if $0<\alpha < 1$ or $1 < \alpha < 2$},
\\
\Df^2 \log\modDf  & \text{if $\alpha = 1$ (SQG)}.
\end{cases}
\end{align*}
Here, the constant $c_\alpha$ is given by \eqref{defC12} for $1<\alpha<2$ or \eqref{defC01}
for $0<\alpha<1$.

The approximate equations for Euler, SQG, gSQG are written out explicitly  in Sections~\ref{approx:euler}, \ref{subsec:sqg},
\ref{approx:gsqg}, respectively.
The approximate equation with $\alpha=2$ for vorticity fronts is the same as the one derived by a systematic, but formal,
multiple-scale expansion directly from the incompressible Euler equations in \cite{BiHu}.

In Theorem~\ref{th:alpha}, we show that the spatially-periodic initial value problem for the approximate equation \eqref{sqg_eq}
with $1<\alpha \le 2$ is well-posed for short times in the Sobolev space $\dot{H}^s(\T)$ for $s> 7/2-\alpha$.
The proof is hyperbolic in nature and makes no use of the lower-order dispersive term.
A similar method of proof applied to the approximate SQG front equation, or its dispersionless version, gives
the short-time weak well-posedness result stated in Theorem~\ref{sqglwpthm}, in which solutions may lose Sobolev derivatives over time.
A stronger well-posedness result for the approximate SQG equation that uses the dispersion and has no loss of derivatives will be proved in \cite{HSZ}, following the method of C\'{o}rdoba et al. \cite{CoGoIo} for the case $\alpha < 1$.

These results can be compared with those of
Rodrigo and Fefferman  \cite{FeRo11, Rod05} for much more regular $C^\infty$ or analytic fronts, and the results of
C\'{o}rdoba,  C\'{o}rdoba, and Gancedo  \cite{ CoCoGa, Gan} for
Sobolev solutions of suitably parametrized equations for bounded SQG patches, which do not lose derivatives.

An outline of this paper is as follows. In Section~\ref{sec:dim}, we discuss the scaling properties of the gSQG front problem, including the anomalous scaling of the SQG problem.  In Section~\ref{sec:reg}, we derive the regularized contour dynamics equation
\eqref{nonconseqn}, and in Section~\ref{sec:approx}, we show that its cubic approximation can be written as
\eqref{sqg_eq}. In Section~\ref{Sec-LWP}, we prove a short-time well-posedness result for \eqref{sqg_eq} with
$1<\alpha \le 2$ and a short-time weak well-posedness result for the approximate SQG equation \eqref{sqg_eq} with $\alpha=1$.
In Section~\ref{sec:nls}, we consider traveling waves and the NLS-approximation
for \eqref{sqg_eq}, and in Section~\ref{sec:num}, we present some numerical solutions of the approximate SQG equation that
appear to show the formation of oscillatory singularities. Finally, in the Appendix, we prove
some algebraic inequalities used in the well-posedness proofs.

\section{Dimensional analysis}
\label{sec:dim}

One reason for the interest of the front problem is that, unlike
the patch problem,
a planar front does not define any length scales, so it preserves the scaling properties of the gSQG equation.

Suppose that $\theta$ is a piecewise-constant, odd function of $y$ that jumps across a planar front $y=0$,
\[
\theta  = \begin{cases} \theta_0 &\text{if $y > 0$},
\\
-\theta_0 & \text{if $y < 0$}.\end{cases}
\]
Up to a constant dimensionless factor, the corresponding gSQG shear flow
$\vec{u} = \left(u(y),0\right)$ with $u = -\partial_y |\partial_y|^{-\alpha} \theta$ is given by
\begin{equation*}
u(y) = \begin{cases} \theta_0 |y| & \text{if $\alpha=2$},
\\
\theta_0 |y|^{\alpha-1} &\text{if $0<\alpha < 1$ or $1<\alpha < 2$},
\\
\theta_0 \log |y| & \text{if $\alpha=1$}.
\end{cases}
\end{equation*}
As illustrated in Figure~\ref{fig:shear}, this shear flow is piecewise linear for the Euler equation, and has
a logarithmic divergence in the tangential velocity on the front for the SQG equation. The tangential velocity
on the front is zero if $1<\alpha \le 2$, and diverges algebraically if $0<\alpha<1$.

\begin{figure}
\includegraphics[width=0.7\textwidth]{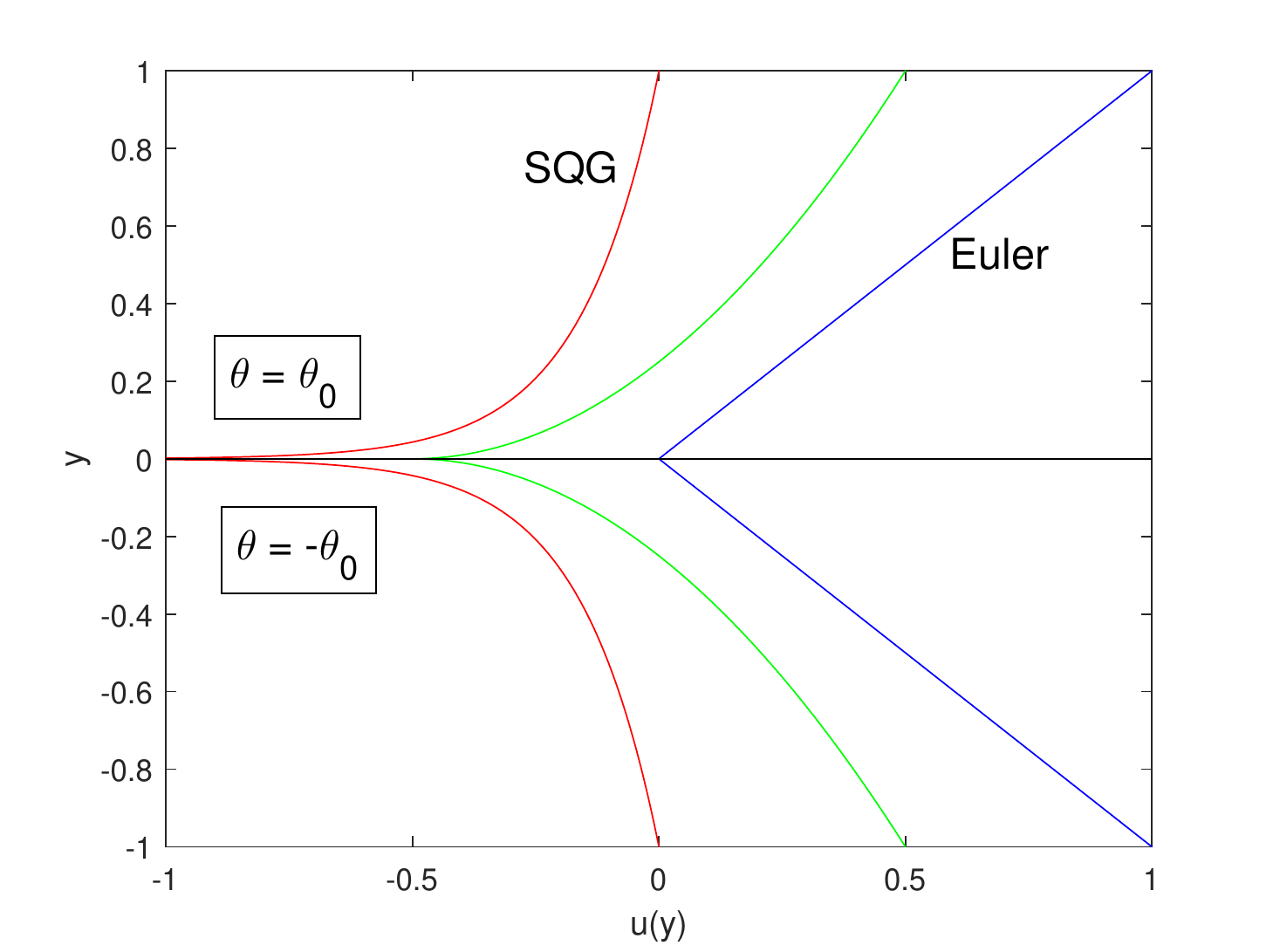}
\caption{The gSQG shear flow for a planar front with $\alpha = 1$ (red), $\alpha=3/2$ (green), and
$\alpha = 2$ (blue).}
\label{fig:shear}
\end{figure}

We denote the dimensions of a variable $f$ by $[f]$ and the dimensions of length and time by $L$ and $T$, respectively. Since $\vec{u} = \nabla^\perp(-\Delta)^{-\alpha/2} \theta$ is a velocity, we have that
\begin{equation*}
[\vec{u}] = \frac{L}{T},\qquad [\theta] = \frac{L^{2-\alpha}}{T}.
\end{equation*}
Thus, the vorticity $\theta$ has dimensions of frequency for the Euler equations ($\alpha=2$),
while $\theta$ has dimensions of velocity for the SQG equation ($\alpha=1$).

The front is linearly stable and waves propagate along it.
For small-amplitude, harmonic perturbations in the displacement of the form
$y = A e^{ik x-i\omega t} + \text{c.c.}$,
a naive dimensional argument gives the linearized dispersion relation
\begin{align*}
\omega &= C \theta_0 (\sgn k)|k|^{2-\alpha}
\end{align*}
where $C$ is a dimensionless constant.
A more detailed analysis verifies this dispersion relation for $0<\alpha <1$ and $1<\alpha \le 2$.
For example, in the case of vorticity discontinuities with $\alpha=2$, the waves are nondispersive with constant frequency \cite{BiHu},
while the waves are dispersive for $0<\alpha <1$ or $1<\alpha < 2$.

For the SQG equation with $\alpha=1$, the only parameter $\theta_0$ is a velocity, so one might expect that the waves on an SQG front are nondispersive with constant linearized phase speed. However, as observed by Rodrigo \cite{Rod05},
one finds that the linearized dispersion relation has the form
\[
\omega = c_0 k \log |k|,\qquad c_0 = C \theta_0
\]
with an additional factor that is  logarithmic in the wavenumber $k$.
Thus, the linearized SQG problem has an anomalous scaling invariance $(x,t)\mapsto (\tilde{x},\tilde{t})$
which is given for $\lambda > 0$ by
\[
\tilde{x}  = \lambda\left[x-c_0 (\log \lambda)t\right],\qquad  \tilde{t} = \lambda t.
\]
This invariance combines a hyperbolic-type scale-invariance  $(x,t)\mapsto (\lambda x,\lambda t)$ with a Galilean transformation
$(x,t)\mapsto (x-c_0(\log \lambda) t,  t)$.

For the nonlinear SQG front equation, \eqref{nonconseqn} with $\alpha=1$, one has $c_0=-2$. Moreover, the displacement
$\varphi$ has dimension $[\varphi] = L$, and the equation
is invariant under the transformation $(x,t,\varphi)\mapsto (\tilde{x},\tilde{t}, \tilde{\varphi})$ with
\begin{equation}
\tilde{x}  = \lambda\left[x + 2 (\log \lambda)t\right],\qquad \tilde{t} = \lambda t,\qquad\tilde{\varphi} = \lambda \varphi.
\label{sqg_scaling}
\end{equation}
We remark that the corresponding similarity solutions   have the form
\[
\varphi(x,t)  = t f\left(\frac{x}{t} - 2 \log t\right)
\]
rather than the usual power-law form for scale-invariant equations.

\section{Regularized contour dynamics equations for fronts}
\label{sec:reg}

In this section, we derive a regularized contour dynamics equation for infinite gSQG fronts. We begin by recalling the derivation of the
contour dynamics equations for bounded patches (see e.g., \cite{CoCoGa, Gan}).

\subsection{Contour dynamics for patches}

Suppose that $\Pbdry(t)$ is a smooth, simple, closed curve with bounded interior $\Omega(t)\subset \R^2$ and
\begin{equation}\label{thetasp}
\theta(\vec{x},t) = \begin{cases}\theta_0 & \vec{x} \in \Omega(t),\\ 0 & \vec{x} \in \Omega^c(t).\end{cases}
\end{equation}
The Green's function for the operator $(-\Delta)^{\alpha/2}$ on $\R^2$ is given by $g_\alpha G(|\vec{x}|)$ where \cite{Stein}
\begin{align*}
G(x) &= \begin{cases}
 -\frac{1}{2 \pi}\log|x| &  \text{if $\alpha = 2$},
 \\
 |x|^{-(2 - \alpha)} & \text{if $0 < \alpha < 2$},
\end{cases}
\qquad
 g_\alpha = \begin{cases} 1 & \text{if $\alpha = 2$},
 \\
 \frac{\Gamma(1 - \frac{\alpha}{2})}{2^\alpha \pi \Gamma(\frac{\alpha}{2})} & \text{if $0 < \alpha < 2$}.
 \end{cases}
\end{align*}
We normalize constants by choosing $\theta_0 = 1/g_\alpha$.
Then, using \eqref{sqg2} and Green's theorem, one finds that the velocity field corresponding to \eqref{thetasp} is
\begin{eqnarray}
\vec{u}(\vec{x}, t)
& = & \int_{\Pbdry(t)} G(|\vec{x}-\vec{x}'|) \vec{n}^\perp(\vec{x}', t) \diff{s(\vec{x}')}, \label{fl-u}
\end{eqnarray}
where $\vec{n} = (m,n)$ is the inward unit normal to $\Omega(t)$,
$\vec{n}^\perp = (-n,m)$, and $s(\vec{x}')$ is arc-length on $\Pbdry(t)$.

We suppose that $\Pbdry(t)$ is given by the parametric equation $\vec{x} = \vec{X}(\gamma,t)$, where $\vec{X}(\cdot,t) \colon \T\to \R^2$.
Since $\theta$ satisfies the transport equation \eqref{sqg1}, the curve $\Pbdry(t)$ moves with normal velocity
$\vec{X}_t\cdot\vec{n} = \vec{u}\cdot\vec{n}$.
If $0<\alpha \le 1$, then the tangential component of \eqref{fl-u} is unbounded on $\Pbdry(t)$, but the normal component
is well-defined, and the motion of the curve is determined solely by its normal velocity. The equation for $\vec{X}$ is therefore
\begin{equation}\label{cde-cpt}
\vec{X}_t(\gamma, t) = c(\gamma, t) \vec{X}_\gamma(\gamma, t) + \int_\T G(|\vec{X}(\gamma,t) - \vec{X}(\gamma', t)|) \left[\vec{X}_{\gamma}(\gamma, t) - \vec{X}_{\gamma'}(\gamma',t)\right] \diff{\gamma'},
\end{equation}
where $c(\cdot,t) \colon \T \to \R$ is an arbitrary smooth function that corresponds to a time-dependent reparametrization of the curve.
The inclusion of the term proportional to the tangent vector $\vec{X}_\gamma$ in the integral ensures that
the integral converges for $0<\alpha\le 1$; this term is not required for $1<\alpha \le 2$, since $G(|\vec{X} - \vec{X}'|)$ is locally
integrable, and it could be absorbed into $c$ in that case.

If $1\le \alpha \le 2$, there is a difficulty in extending the
contour dynamics equation to an infinite front $y=\varphi(x,t)$ where  $\varphi(\cdot,t) \colon \R \to \R$.
In that case, $\vec{X}(x,t) = \left(x,\varphi(x,t)\right)$, and we get formally
from \eqref{cde-cpt} that $c=0$ and
\begin{align}
\varphi_t(x, t) & = \int_\R G\left(\sqrt{(x - x')^2 + (\varphi(x, t) - \varphi(x', t))^2}\right) [\varphi_x(x, t) - \varphi_{x'}(x', t)] \diff{x'}.
\label{Req}
\end{align}
This equation makes sense for $0<\alpha < 1$ if $\varphi$ is a smooth, rapidly decaying (or bounded) function of $x$,
since the integral converges at $x=x'$ and at infinity \cite{CoGoIo}. However, it does not make sense for $1 \leq \alpha \leq 2$, since $G$ is not integrable at infinity and $\varphi_x(x, t)$ does not decay as $x'\to \infty$.

Roughly speaking, we have to regularize a short-distance ``ultraviolet'' singularity when $0<\alpha\le 1$,
caused by the infinite tangential velocity on the front, and a long-distance
``infrared'' singularity when $1\le \alpha\le 2$, caused by the slow decay of the Green's function.
The SQG equation --- which is the primary case of interest here ---
is peculiar in that it exhibits both infrared and ultraviolet singularities.

To regularize the
long-distance singularity, we introduce a long-range cutoff parameter $\lambda$, make a Galilean transformation
into a reference frame moving with  a suitable velocity $v(\lambda)$, where $v(\lambda)\to\infty$ as $\lambda\to\infty$,
and take the limit $\lambda\to\infty$.

The need for a Galilean transformation to get a well-defined limit can be seen directly in the case of the Euler equations. For example,
suppose one regards a planar vorticity discontinuity as the limit of a flow in a wide channel $-h <y<h$ as $h\to \infty$.
If one requires that the tangential flow on the channel boundaries $y=\pm h$ is equal to zero, then the corresponding
shear flow is $\vec{u} = (u(y),0)$ with $u(y) = \theta_0 (|y| - h)$.
Thus, one needs to make a Galilean transformation $x\mapsto x+\theta_0 h t$ in order to get a well-defined limit as $h\to \infty$.
This regularization would lead to the same equations as the ones derived here for a nonplanar front, but it appears to be more complicated to implement.

For the SQG equations, we have $\vec{u} =  R^\perp \theta$, where $R^\perp$ is the perpendicular Riesz transform.  The Riesz transform of an $L^{\infty}$-function belongs, in general, to $\text{BMO}$, and $\text{BMO}$-functions are only defined modulo an additive constant. Thus, an alternative regularization procedure for SQG fronts would be to determine $\vec{u}\in \text{BMO}(\R^2)$
modulo a constant and derive the contour dynamics equations from that velocity.
This procedure would presumably lead to equivalent equations to the ones derived here, but it also appears to be more complicated to implement.

\subsection{Cutoff Regularization}

We consider a front $y=\varphi(x,t)$ across which $\theta$ jumps from $\theta_0/2$ to $-\theta_0/2$.
After a change of variables $x'=x+\eta$ in \eqref{Req}, we introduce a large cutoff parameter $\lambda > 0$
to get the truncated  equation
\begin{equation}\label{lambda-eveqn0}
\varphi_t(x, t) = \int_{-\lambda}^{\lambda}G\left(\sqrt{\eta^2
+ (\varphi(x, t) - \varphi(x + \eta, t))^2}\right) \left[\varphi_x(x, t) - \varphi_x(x + \eta, t)\right] \diff{\eta}.
\end{equation}
We assume that  $\varphi(\cdot,t) \colon \R \to \R$ is a smooth bounded function
with bounded first derivative. The integral in \eqref{lambda-eveqn0} converges since $\eta G(\eta)$
is locally integrable for $\alpha > 0$ when $G(\eta)$ is given by \eqref{defG}.

It is convenient to write \eqref{lambda-eveqn0} in the conservative form
\begin{equation}\label{lambda-eveqn1}
\varphi_t(x, t) = \partial_x \int_{-\lambda}^{\lambda} F\left(\eta, \varphi(x, t) - \varphi(x + \eta, t)\right) \diff{\eta},
\end{equation}
where $F$ is defined by
\begin{equation}
F(x, y) = \int_0^y G\left(\sqrt{x^2 + s^2}\right) \diff{s}.
\label{defF}
\end{equation}
To take the limit $\lambda\to \infty$,
we write \eqref{lambda-eveqn1} as
\begin{align}
\begin{split}
&\varphi_t(x, t) + \partial_x  \int_{-\lambda}^{\lambda} K(\eta, \varphi(x, t) - \varphi(x + \eta, t))  \diff{\eta}
- \partial_x  \int_{-\lambda}^{\lambda}
 G(\eta)\left[ \varphi(x, t)  - \varphi(x + \eta, t) \right] \diff{\eta}
= 0,
\end{split}
\label{lambda-eveqn2}
\end{align}
where
\begin{align}
\begin{split}
K(x, y) & = G(x)y - F(x, y).
\end{split}
\label{defK}
\end{align}

First, we consider the nonlinear term in \eqref{lambda-eveqn2}. We find from \eqref{defF} that
\begin{equation*}
F(x, y) \sim  G(x)y + \O\left(\frac{G'(x)}{x}\right)
\qquad
\text{as $\abs{x} \to \infty$ with $y$ fixed},
\]
so when $G$ is given by \eqref{defG} and $\varphi(\cdot,t)$ is bounded, we have
\begin{equation}
K\left(\eta,\varphi(x, t) - \varphi(x + \eta, t)\right) = \O\left(\frac{1}{|\eta|^{4-\alpha}}\right)
\qquad \text{as $|\eta| \to \infty$}.
\label{decayK}
\end{equation}
It follows that
\[
\lim_{\lambda\to\infty}  \int_{-\lambda}^{\lambda} K\left(\eta, \varphi(x, t) - \varphi(x + \eta, t)\right)  \diff{\eta}
=  \int_{\R} K(\eta, \varphi(x, t) - \varphi(x + \eta, t))  \diff{\eta},
\]
since the integral converges on $\R$.

Next, we consider the linear term
\[
\L_\lambda \varphi(x,t) = -\int_{-\lambda}^{\lambda}
 G(\eta)\left[\varphi(x, t) - \varphi(x + \eta, t)\right] \diff{\eta},
 \]
where $\eta G(\eta)$ is locally integrable.
 There are three cases, depending on whether the Green's function $G(\eta)$ is: (a) nonintegrable at $0$ and
 integrable at infinity($0<\alpha<1$);
 (b) integrable at $0$ and nonintegrable at infinity ($1< \alpha \le 2$); (c) nonintegrable at both $0$ and $\infty$ ($\alpha =1$).
 We consider each of them in turn.

 In case (a), we have $\L_\lambda \varphi \to \L\varphi$ as $\lambda \to \infty$, where
 \begin{equation*}
 \L \varphi(x,t) = -\int_{\R} G(\eta)\left[\varphi(x, t) - \varphi(x + \eta, t)\right] \diff{\eta}.
 \end{equation*}
 This operator is translation invariant; its
 symbol $b(k)$, such that $\L e^{ik x} = b(k) e^{ik x}$, is the function
 \begin{equation}
 b(k) = -\int_{\R} G(\eta)\left(1 - e^{ik\eta}\right) \diff{\eta}.
 \label{defLa}
 \end{equation}
 Thus, the limit of \eqref{lambda-eveqn2} as $\lambda\to \infty$ is
 \begin{equation}\label{conseqn}
\varphi_t(x, t) + \partial_x \int_{\R} K(\eta, \varphi(x, t) - \varphi(x + \eta, t)) \diff{\eta} + \L \varphi_x(x, t) =0.
\end{equation}
Taking the $x$-derivative under the integral sign in \eqref{conseqn} and using \eqref{defK}, we get the non-conservative
form of the regularized equation in \eqref{nonconseqn}.

 In case (b), we write
 \begin{align*}
 \L_\lambda \varphi(x,t)
 &= v(\lambda)\varphi(x,t) + \int_{-\lambda}^{\lambda}
 G(\eta) \varphi(x + \eta, t) \diff{\eta},
 \end{align*}
where
\begin{align}
 v(\lambda) = - 2\int_{0}^\lambda G(\eta) \diff{\eta},
 \label{defvb}
 \end{align}
 which diverges as $\lambda\to\infty$. Then \eqref{lambda-eveqn2} becomes
\begin{align*}
\begin{split}
&\varphi_t(x, t) + v(\lambda)\varphi_x(x,t)
+ \partial_x  \int_{-\lambda}^{\lambda} K(\eta, \varphi(x, t) - \varphi(x + \eta, t))  \diff{\eta}
+  \partial_x \int_{-\lambda}^{\lambda}  G(\eta) \varphi(x + \eta, t) \diff{\eta} = 0.
\end{split}
\end{align*}
We make a Galilean transformation $x \mapsto x - v(\lambda)t$ into a reference frame moving with velocity
$v(\lambda)$, which removes the term $v\varphi_x$, and then let $\lambda\to \infty$, which gives
the regularized equation \eqref{conseqn} with
\begin{equation*}
\L\varphi(x,t) =  \int_{\R} G(\eta) \varphi(x + \eta, t) \diff{\eta}.
\end{equation*}
This integral converges if $\varphi(\cdot,t)$ decays sufficiently rapidly at infinity, and can be interpreted
in a distributional sense in other cases.
The symbol of $\L$,
\begin{equation}
b(k) = \int_\R G(\eta) e^{ik \eta} \, d\eta,
 \label{defLb}
\end{equation}
is well-defined as a tempered distribution since
$G(\eta)$ is locally integrable and has, at most, slow growth as $|\eta|\to\infty$.

In case (c), we write
\begin{align*}
 \L_\lambda \varphi(x,t) &= v(\lambda) \varphi_x(x,t)
 + \int_{1<|\eta| <\lambda} G(\eta) \varphi(x + \eta, t) \diff{\eta}
 - \int_{|\eta| <1} G(\eta)\left[\varphi(x, t) - \varphi(x + \eta, t)\right] \diff{\eta},
\end{align*}
where
\begin{equation}
 v(\lambda) = -2\int_{1}^{\lambda} G(\eta) \diff{\eta}.
 \label{defvc}
 \end{equation}
Making a Galilean transformation $x\mapsto x-v(\lambda) t$, and taking the limit of the resulting equation as $\lambda \to \infty$,
we get the regularized equation \eqref{conseqn} with
 \begin{equation*}
 \L \varphi(x,t) = \int_{|\eta| >1} G(\eta) \varphi(x + \eta, t) \diff{\eta}
 - \int_{|\eta| <1} G(\eta)\left[\varphi(x, t) - \varphi(x + \eta, t)\right] \diff{\eta}.
 \end{equation*}
 In this case, the symbol of $\L$ is the sum of a tempered distribution and a function,
 \begin{equation}
 b(k) = \int_{|\eta| >1} G(\eta) e^{ik\eta}\diff{\eta}
 -\int_{|\eta| <1} G(\eta)\left[1 - e^{ik\eta}\right] \diff{\eta}.
\label{defLc}
\end{equation}

\subsection{Regularized front equations}

In this section, we write out the specific form of the regularized front equations for the Euler, SQG, and gSQG
equations.

\subsubsection{Euler equation $(\alpha = 2)$}
\label{front:euler}

The Green's function for the Euler equation is
\[G(x) = -\frac{1}{2\pi} \log|x|.\]
It follows from \eqref{defF} and \eqref{defK} that
\[\begin{aligned}
F(x, y)
& = -\frac{1}{2\pi} \left\{y \log\sqrt{x^2 + y^2} + x \tan^{-1}\left(\frac{y}{x}\right) - y\right\},
\\
K(x,y) &= \frac{1}{2\pi} \left\{y \log\left[\frac{\sqrt{x^2 + y^2}}{|x|}\right]
+ x \tan^{-1}\left(\frac{y}{x}\right) - y\right\}.
\end{aligned}\]
In addition,  the velocity \eqref{defvb} used in the Galilean transformation is
\[
v(\lambda) = \frac 1\pi \left(\lambda \log\lambda - \lambda\right).
\]

Using the distributional Fourier transform of the logarithm \cite{Vlad}, we get from \eqref{defLb} that
the symbol of $\L$ is given by
\begin{align*}
b(k) &= -\frac{1}{2\pi}\F\left[\log|\cdot|\right](-k)
= \gamma \delta(-k) + \frac{1}{2} \text{p.f.} \frac{1}{|k|},
\end{align*}
where $\gamma$ is the Euler-Mascheroni constant.
It follows that $\L\partial_x = -\frac{1}{2}\Hi$ where $\Hi$ is the Hilbert transform with symbol $-i\sgn k$.

Thus, the regularized equation for vorticity fronts is
\[\begin{aligned}
\varphi_t(x, t) & + \frac{1}{2 \pi} \partial_x \int_\R \bigg\{[\varphi(x, t) - \varphi(x + \eta, t)]
\log\left[\frac{\sqrt{\eta^2 + [\varphi(x, t) - \varphi(x + \eta, t)]^2}}{|\eta|}\right]\\
& \quad + \eta \tan^{-1} \left(\frac{\varphi(x, t) - \varphi(x + \eta, t)}{\eta}\right)
- [\varphi(x, t) - \varphi(x + \eta, t)]\bigg\} \diff{\eta} = \frac 12 \hilbert \varphi(x, t),
\end{aligned}\]
and the non-conservative form of the equation is
\[
\varphi_t(x, t) + \frac{1}{2\pi} \int_\R [\varphi_x(x, t) - \varphi_x(x + \eta, t)]
 \log\left[\frac{\sqrt{\eta^2 + [\varphi(x, t) - \varphi(x + \eta, t)]^2}}{|\eta|}\right]\diff{\eta} = \frac 12 \hilbert \varphi(x, t).
\]

\subsubsection{SQG equation $(\alpha = 1)$}
\label{front:sqg}

The Green's function for the SQG equation  is
\[
G(x) = \frac{1}{|x|}.
\]
It follows from \eqref{defF} and \eqref{defK}  that
\begin{align*}
F(x, y) & = \sinh^{-1}\left(\frac{y}{\abs{x}}\right),
\qquad
K(x,y)  =\frac{y}{\abs{x}} - \sinh^{-1}\left(\frac{y}{\abs{x}}\right).
\end{align*}
In addition,  the velocity \eqref{defvc} used in the Galilean transformation is
\[
v(\lambda) = -2\log \lambda.
\]

We find from \eqref{defLc} that
\begin{align*}
b(k) &= 2\left(\int_1^\infty \frac{\cos (k\eta)}{\eta}\diff{\eta} - \int_0^1 \frac{1-\cos (k\eta)}{\eta} \diff{\eta} \right)
\\
&= v_1 - 2\log|k|,
\\
v_1 &= 2\left(\int_1^\infty \frac{\cos\eta}{\eta}\diff{\eta} - \int_0^1 \frac{1-\cos\eta}{\eta}\diff{\eta}\right).
\end{align*}
We can absorb $v_1$ into $v(\lambda)$ by the use of a Galilean transformation $x\mapsto x - [v_1 + v(\lambda)]t$, and
then the remaining part of the linear operator is $\L = -2\log \modDf$.

Thus, the  regularized equation for SQG fronts is
\[\varphi_t(x, t) + \partial_x \int_\R \bigg\{\frac{\varphi(x, t) - \varphi(x + \eta, t)}{\abs{\eta}}
- \sinh^{-1}\bigg[\frac{\varphi(x, t) - \varphi(x + \eta, t)}{\abs{\eta}}\bigg]\bigg\} \diff{\eta}
= 2 \log\abs{\partial_x} \varphi_x(x, t),\]
and the non-conservative form of the equation is
\[
\varphi_t(x, t) + \int_\R [\varphi_x(x, t) - \varphi_x(x + \eta, t)] \bigg\{\frac{1}{\abs{\eta}}
- \frac{1}{\sqrt{\eta^2 + [\varphi(x, t) - \varphi(x + \eta, t)]^2}}\bigg\} \diff{\eta} = 2 \log\abs{\partial_x} \varphi_x(x, t).
\]

\subsubsection{gSQG equation}
\label{front:gsqg}

The Green's function for the gSQG equation is
\[G(x) = \frac{1}{|x|^{2 - \alpha}}.\]
For $1 < \alpha < 2$, it follows from \eqref{defF} and \eqref{defK} that
\begin{align*}
F(x, y) &= \int_0^y \frac{1}{(x^2 + s^2)^{(2 - \alpha)/2}} \diff{s},
\qquad
K(x, y) = \frac{y}{|x|^{2 - \alpha}}
- \int_0^y \frac{1}{(x^2 + s^2)^{(2 - \alpha)/2}} \diff{s}.
\end{align*}
In addition, from \eqref{defvb}, the velocity used in the regularization is
\[
v(\lambda) =  -\frac{2 \lambda^{\alpha-1}}{\alpha-1}.
\]
We find from \eqref{defLb}, that
\begin{align*}
b(k) &
= b_\alpha\abs{k}^{1-\alpha},
\qquad
b_\alpha = 2\int_0^\infty \frac{\cos \eta}{\eta^{2-\alpha}}\diff\eta = 2 \sin\left(\frac{ \pi\alpha}{2}\right) \Gamma(\alpha-1),
\end{align*}
where $b_\alpha > 0$ is given by \eqref{a-const}.
The corresponding operator is $\L = b_\alpha \modDf^{1-\alpha}$.

Thus, the regularized equation for gSQG fronts with $1<\alpha<2$ is
\[\varphi_t(x, t)  + \partial_x \int_\R \Biggl\{\bigg[\frac{\varphi(x, t)
- \varphi(x + \eta, t)}{\abs{\eta}^{2 - \alpha}}\bigg] - F\left(k, \varphi(x, t) - \varphi(x + \eta)\right)\Biggr\} \diff{\eta}
+ b_\alpha \modDf^{1-\alpha} \varphi_x(x, t)= 0,\]
and the non-conservative form of the equation is
\begin{equation}\label{reg-1a2}\begin{aligned}
&\varphi_t(x, t) + \int_\R [\varphi_x(x, t) - \varphi_x(x + \eta, t)] \Biggr\{ \frac{1}{\abs{\eta}^{2 - \alpha}}
- \frac{1}{\left(\eta^2
 + [\varphi(x, t) - \varphi(x + \eta, t)]^2\right)^{(2- \alpha) / 2}} \Biggl\} \diff{\eta}
\\
&\qquad\qquad\qquad + b_\alpha \abs{\partial_x}^{1 - \alpha} \varphi_x(x, t) = 0.
\end{aligned}\end{equation}

The derivation of the regularized gSQG equation for $0 < \alpha < 1$ is similar to the case of $1 < \alpha < 2$,
except that we do not need to make a Galilean transformation to obtain a finite limit, and
we find $b(k)$ from \eqref{defLa}. One obtains the same equation
\eqref{reg-1a2} as in the case $1<\alpha<2$, where $b_\alpha<0$ is  given by \eqref{a-const}.
This equation agrees with the gSQG equation for $0 < \alpha < 1$ that is analyzed in \cite{CoGoIo}.

\subsection{Spatially periodic solutions}

The previous equations do not require that $\varphi(\cdot,t)$ is rapidly decreasing; in particular, they apply to smooth periodic solutions
$\varphi(\cdot,t) \colon \T \to \R$ where $\T = \R/2\pi\Z$.
The symbol of the linear operator $\L$ remains the same. Moreover,
we can write the nonlinear term in \eqref{conseqn}  as
\begin{align*}
 \int_{\R} K(\eta, \varphi(x, t) - \varphi(x + \eta, t))  \diff{\eta}
 &=\int_\T K_p(\eta, \varphi(x, t) - \varphi(x + \eta, t))  \diff{\eta},
\\
K_p(x,y) &= \sum_{n\in \Z}  K(x + 2n\pi, y).
\end{align*}
The sum defining $K_p$ converges because of \eqref{decayK}.
The conservative form of the periodic front equation is then
\begin{equation*}
\varphi_t(x, t) + \partial_x \int_{\T} K_p(\eta, \varphi(x, t) - \varphi(x + \eta, t)) \diff{\eta} + \L \varphi_x(x, t) =0.
\end{equation*}

The non-conservative form can be written as
\begin{align}
\begin{split}
&\varphi_t(x, t) +  \int_{\T}\left[\varphi_x(x, t) - \varphi_x(x + \eta, t)\right]
\biggl\{G_p(\eta,0)
- G_p\left(\eta, \varphi(x, t) - \varphi(x + \eta, t)\right)\biggr\}\diff{\eta}
\\
&\hskip2in+ \L \varphi_x(x, t) =0,
\end{split}
\label{pernonconseqn}
\end{align}
where
\begin{equation*}
G_p(x,y) = G \left(\sqrt{x^2 + y^2}\right) + \sum_{n\in\Z_*}\left[G \left(\sqrt{(x+2\pi n)^2 + y^2}\right) - G(2\pi n)\right]
\end{equation*}
is the Green's function of $(-\Delta)^{\alpha/2}$ on the cylinder $\T\times \R$, and $\Z_* = \Z\setminus\{0\}$.

One can verify that \eqref{pernonconseqn} is equivalent, up to a Galilean transformation, to the straightforward
contour dynamics equation on a cylinder,
\begin{align*}
\begin{split}
&\varphi_t(x, t)  -  \int_{\T}\left[\varphi_x(x, t) - \varphi_x(x + \eta, t)\right]
G_p\left(\eta,\varphi(x, t) - \varphi(x + \eta, t)\right)\diff{\eta} = 0.
\end{split}
\end{align*}
However, \eqref{pernonconseqn} explicitly separates the linear dispersive term from the cubic-order nonlinearity.

For the Euler equation with
\[G(\eta) = -\frac{1}{2\pi} \log|\eta|,\]
we get from the Euler product formula for $\sin z$ that
\begin{align*}
G_p(x,y) &= -\frac{1}{2\pi} \log \left|\sin\left(\frac{z}{2}\right)\right|,
\end{align*}
where $z=x+iy$.
For the periodic SQG front equation with $G(\eta)=1/|\eta|$, we have
\begin{align*}
G_p(x,y) &= \frac{1}{\sqrt{x^2 +y^2}} + \sum_{n \in \Z_*}\left\{ \frac{1}{\sqrt{(x+2\pi n)^2 +y^2}}
- \frac{1}{2\pi |n|}\right\}.
\end{align*}

\subsection{Hamiltonian structure}

Let $\H$ be a functional of $\varphi \colon \R \to \R$ of the form
\[
\H(\varphi) = \frac 12 \iint_{\R \times \R} H(x - x', \varphi - \varphi') \diff{x}\diff{x'},
\]
where $H(x,y)$ is an even function of $x, y\in \R$ and $\varphi = \varphi(x)$, $\varphi' = \varphi(x')$.
The variational derivative of $\mathcal{H}$ is given by
\[
\frac{\delta \H}{\delta \varphi(x)} = \int_\R K(x - x', \varphi - \varphi') \diff{x'},
\]
where  $K(x, y) = H_y(x, y)$.
Thus, the conservative front equation \eqref{conseqn} has the Hamiltonian form
\begin{equation}
\varphi_t + \partial_x \bigg[\frac{\delta \H}{\delta \varphi}\bigg]= 0,
\label{hameqn}
\end{equation}
where $\Df$ is the Hamiltonian operator and the Hamiltonian is
\begin{align*}
\H(\varphi) &= \frac 12 \iint_{\R \times \R} H\left(x - x', \varphi - \varphi'\right) \diff{x}\diff{x'}
+ \frac 12 \int_{\R} \varphi \L \varphi \diff{x},
\\
H(x,y) &= \int_0^y K(x,s)\diff{s}.
\end{align*}
The corresponding conserved momentum, which generates spatial translations, is
\[\P(\varphi) = \frac 12 \int_\R \varphi^2 \diff{x}.\]

\section{Approximate equation}
\label{sec:approx}

In this section, we derive the approximate equation \eqref{sqg_eq} for fronts with small slopes by truncating the nonlinearity
in the full equation \eqref{nonconseqn} at cubic terms.

It follows from \eqref{defF} that
\[\begin{aligned}
F(x, y) & = |x|\left\{G(x) \frac{y}{|x|} + \frac 16 x G'(x) \frac{y^3}{|x|^3}
 + \O\left(\frac{y^5}{\abs{x}^5}\right)\right\}
 \qquad \text{as}\quad \frac{y}{|x|} \to 0.
\end{aligned}\]
Retaining the lowest order terms in $y$, we find that the kernel $K$ in \eqref{defK} has the approximation
\[K(x, y) \sim - \frac{1}{6}\frac{G'(x)}{x}y^3.\]
Thus, the cubic approximation of the conservative equation \eqref{conseqn} is
\begin{equation}\label{consapprox}
\varphi_t(x, t) -\frac 16 \partial_x \int_\R \frac{G'(\eta)}{\eta}
\left [\varphi(x, t) - \varphi(x + \eta, t)\right]^3 \diff{\eta} + \L \varphi_x(x, t) = 0.
\end{equation}
Equation \eqref{consapprox} is equivalent to \eqref{sqg_eq}, as we show
by writing it in spectral form.

\subsection{Spectral equation}

For definiteness, we suppose that $\varphi \colon \R \to \R$ is a smooth function that
decreases sufficiently rapidly at infinity, with Fourier transform
\[
\hat{\varphi}(k) = \frac{1}{2\pi} \int_\R \varphi(x) e^{-ik x}\diff{x}.
\]
The same results apply to periodic functions $\varphi \colon \T \to \R$, with Fourier
transforms replaced by Fourier series.

Then
\begin{equation}
-\int_\R \frac{G'(\eta)}{\eta}\left [\varphi(x) - \varphi(x + \eta)\right]^3 \diff{\eta}
= \int_{\R^3} T(k_2, k_3, k_4) \hat{\varphi}(k_2) \hat{\varphi}(k_3)
\hat{\varphi}(k_4) e^{i(k_2 + k_3 + k_4) x} \diff{k_2} \diff{k_3} \diff{k_4},
\label{defnonlin}
\end{equation}
where
\begin{align}
\begin{split}
T(k_2, k_3, k_4) & = -\Re \int_\R \frac{G'(\eta)}{\eta} [(1 - e^{i k_2 \eta})(1 - e^{i k_3 \eta})(1 - e^{i k_4 \eta})] \diff{\eta}
\\
& = -\Re \int_\R \frac{G'(\eta)}{\eta} \bigg\{\left(1 - e^{i k_2 \eta}\right) + \left(1 - e^{i k_3 \eta}\right)
+ \left(1 - e^{i k_4 \eta}\right) + \left(1 - e^{i (k_2 + k_3 + k_4) \eta}\right)\\
& \qquad\qquad\qquad\qquad- \left(1 - e^{i (k_2 + k_3) \eta}\right) - \left(1 - e^{i (k_2 + k_4) \eta}\right)
- \left(1 - e^{i (k_3 + k_4) \eta}\right)\bigg\} \diff{\eta}.
\end{split}
\label{defTint}
\end{align}
We assume that $\eta^2 G'(\eta)$ is integrable at $0$ and $G'(\eta)/\eta$ is integrable at infinity, as is the case for the
Green's function \eqref{defG}, and  consider three cases, depending on whether:
(a) $\eta G'(\eta)$ is integrable at $0$  ($1<\alpha\le 2$);
(b) $\eta G'(\eta)$ is nonintegrable at $0$ and  integrable at infinity ($0<\alpha<1$);
(c) $\eta G'(\eta)$ is nonintegrable at $0$ and nonintegrable at infinity ($\alpha =1$).

In case (a), we write $T$ in \eqref{defTint} as
\begin{equation}
T(k_2, k_3, k_4) = a(k_2) + a(k_3) + a(k_4) + a(k_2 + k_3 + k_4) - a(k_2 + k_3) - a(k_2 + k_4) - a(k_3 + k_4),
\label{defT}
\end{equation}
where $a \colon \R \to \R$ is defined by
\begin{equation}
a(k) = -\int_\R \frac{G'(\eta)}{\eta}\left[1 - \cos(k \eta)\right] \diff{\eta}.
\label{defaa}
\end{equation}
In case (b), we use the cancelation
\begin{equation}
k_2^2 + k_3^2 + k_4^2 + (k_2 + k_3 + k_4)^2 - (k_2 + k_3)^2 - (k_2 + k_4)^2 - (k_3 + k_4)^2 = 0
\label{kcancel}
\end{equation}
in \eqref{defTint} and write $T$ as \eqref{defT} where
\begin{equation}
a(k) = -\int_\R \frac{G'(\eta)}{\eta}\left[1 - \frac{1}{2} (k\eta)^2 - \cos(k \eta)\right] \diff{\eta}.
\label{defab}
\end{equation}
In case (c),  we use this cancelation only for $|\eta| < 1$ and write $T$ as \eqref{defT} where
\begin{equation}
a(k) = -\int_{|\eta|< 1} \frac{G'(\eta)}{\eta}\left[1 - \frac{1}{2} (k\eta)^2 - \cos(k \eta)\right] \diff{\eta}
-  \int_{|\eta|> 1} \frac{G'(\eta)}{\eta}\left[1 - \cos(k \eta)\right] \diff{\eta}.
\label{defac}
\end{equation}

In order to give a symmetric expression for $T$, it is convenient to introduce another variable $k_1$
and define $S \colon \R^4 \to \R$ by
\begin{align}
\begin{split}
S(k_1, k_2, k_3, k_4)
& = a(k_1) + a(k_2) + a(k_3) + a(k_4)
\\
&- \frac 12 \bigg\{a(k_1 + k_2) + a(k_1 + k_3) + a(k_1 + k_4)
+ a(k_2 + k_3) + a(k_2 + k_4) + a(k_3 + k_4)\bigg\}.
\end{split}
\label{defS}
\end{align}
Then
\begin{equation}
T(k_2, k_3, k_4) = S(k_1, k_2, k_3, k_4)
\qquad
\text{on $k_1 + k_2 + k_3 + k_4 = 0$}.
\label{defST}
\end{equation}
Using \eqref{defnonlin} and \eqref{defST} in \eqref{consapprox}, we see that the spectral form of  \eqref{consapprox} is
\begin{align}\label{specapprox}
\begin{split}
&\hat{\varphi}_t(k_1, t)
 + \frac 16 ik_1 \int_{\R^3} \delta(k_1 + k_2 + k_3 + k_4) S(k_1, k_2, k_3, k_4)
\hat{\varphi}^*(k_2, t) \hat{\varphi}^*(k_3, t) \hat{\varphi}^*(k_4, t) \diff{k_2}\diff{k_3}\diff{k_4}
\\
&\qquad\qquad\qquad\qquad\qquad + i k_1 b(k_1) \hat{\varphi}(k_1, t)= 0,
\end{split}
\end{align}
where $\delta$ denotes the delta-distribution, $\hat{\varphi}^*(k) = \varphi(-k)$ denotes the complex conjugate of $\hat{\varphi}(k)$,
and $b(k)$ is the symbol of $\L$.

Using  the convolution theorem and \eqref{defS} to take the inverse Fourier transform of \eqref{specapprox},
we find that the approximate equation \eqref{consapprox} can be written as
\begin{equation}\label{realapprox}
\varphi_t + \frac 12 \partial_x \bigg\{\varphi^2 \A \varphi - \varphi \A \varphi^2
+ \frac 13 \A \varphi^3\bigg\} + \L \varphi_x = 0,
\end{equation}
where $\A$ is the self-adjoint operator with symbol $a(k)$.

\subsection{Approximate equations}

In this section, we write out the explicit form of the approximate equation derived above for Euler, SQG, and gSQG fronts.

\subsubsection{Euler equation $(\alpha = 2)$}
\label{approx:euler}

For the Euler equation, we have
\[
G'(\eta) = -\frac{1}{2 \pi \eta},\qquad \L = \frac{1}{2}\modDf^{-1},
\]
and the approximate equation \eqref{consapprox} for vorticity fronts is
\[\varphi_t(x, t) + \frac{1}{12 \pi} \partial_x \int_\R \frac{[\varphi(x, t) - \varphi(x + \eta, t)]^3}{\abs{\eta}^2} \diff{\eta}
= \frac 12 \hilbert \varphi(x, t),\]
where $\hilbert$ is the Hilbert transform.
The symbol $a$ is given by \eqref{defaa}, so
\[a(k) = \frac{1}{\pi}\int_0^\infty \frac{1 - \cos(k \eta)}{\eta^2} \diff{\eta} =  \frac 12 \abs{k},\]
with the corresponding operator
\[\A = \frac 12 \abs{\partial_x}.\]
Thus, from  \eqref{realapprox}, the approximate Euler equation is
\begin{equation}
\varphi_t + \frac 14 \partial_x \bigg\{\varphi^2 \abs{\partial_x} \varphi - \varphi \abs{\partial_x} \varphi^2
+ \frac 13 \abs{\partial_x} \varphi^3\bigg\} = \frac 12 \hilbert \varphi.\
\label{approx2}
\end{equation}
This equation agrees with the asymptotic equation derived in \cite{BiHu} directly from the incompressible Euler equations
when the vorticity jumps from $-1/2$ to $1/2$ across the front.

\subsubsection{SQG equation $(\alpha = 1)$}
\label{subsec:sqg}

For the SQG equation, we have
\[
G'(\eta) = -\frac{1}{\eta |\eta|},\qquad \L = -2\log\modDf,
\]
and the approximate equation \eqref{consapprox} for SQG fronts is
\[\varphi_t(x, t) + \frac 16 \partial_x \int_\R \bigg[\frac{\varphi(x, t) - \varphi(x + \eta, t)}{\abs{\eta}}\bigg]^3 \diff{\eta}
= 2 \log\abs{\partial_x} \varphi_x(x, t).\]
The symbol $a$ is given by \eqref{defac}, so
\begin{align*}
a(k)
& = 2\int_0^1 \frac{1 - \frac 12 k^2 \eta^2 - \cos(k \eta)}{\eta^3} \diff{\eta}
+ 2\int_1^\infty \frac{1 - \cos(k \eta)}{\eta^3} \diff{\eta}
\\
&= 2k^2 \left(\int_0^{|k|} \frac{1 - \frac 12 \eta^2 - \cos \eta}{\eta^3} \diff{\eta}
+ \int_{|k|}^\infty \frac{1 - \cos \eta}{\eta^3} \diff{\eta}\right).
\end{align*}
Writing
\begin{align*}
\int_0^{|k|} \frac{1 - \frac 12 \eta^2 - \cos \eta}{\eta^3} \diff{\eta}
+ \int_{|k|}^\infty \frac{1 - \cos \eta}{\eta^3} \diff{\eta}
&=
\frac{1}{2}C -\frac{1}{2} \int_1^{|k|} \frac{\diff{\eta}}{\eta}
\end{align*}
where
\[
C = 2\int_0^1 \frac{1 - \frac 12 \eta^2 - \cos\eta}{\eta^3} \diff{\eta}
+ 2\int_{1}^{\infty} \frac{1 - \cos\eta}{\eta^3} \diff{\eta},
\]
is a  constant, we get that $a(k) = Ck^2-k^2 \log|k|$.
The term $Ck^2$ cancels out of the expression in \eqref{defS} for $S(k_1, k_2, k_3, k_4)$
on $k_1 + k_2 + k_3 + k_4 = 0$, so we can take
\begin{equation}
a(k) = -k^2 \log|k|,
\label{defsqga}
\end{equation}
which gives the kernel
\begin{equation}\label{Skernelsqg}\begin{aligned}
S(k_1, k_2, k_3, k_4)
& = -k_1^2 \log\abs{k_1} - k_2^2 \log\abs{k_2} - k_3^2 \log\abs{k_3} - k_4^2 \log\abs{k_4}\\
& \quad + \frac 12 \bigg\{(k_1 + k_2)^2 \log\abs{k_1 + k_2} + (k_1 + k_3)^2 \log\abs{k_1 + k_3} + (k_1 + k_4)^2 \log\abs{k_1 + k_4}\\
& \qquad + (k_2 + k_3)^2 \log\abs{k_2 + k_3} + (k_2 + k_4)^2 \log\abs{k_2 + k_4} + (k_3 + k_4)^2 \log\abs{k_3 + k_4}\bigg\}.
\end{aligned}\end{equation}
As a result of the cancelation \eqref{kcancel}, this function is homogeneous of degree $2$ on $k_1+k_2+k_3+k_4 = 0$.
The operator corresponding to \eqref{defsqga} is
\[
\A = \partial_x^2 \log\abs{\partial_x}.
\]
Thus, from  \eqref{realapprox}, the approximate SQG equation is
\begin{equation}\label{realapproxsqg}
\varphi_t + \frac 12 \partial_x \bigg\{\varphi^2 \log\abs{\partial_x} \varphi_{xx}
- \varphi \log\abs{\partial_x} (\varphi^2)_{xx} + \frac 13 \log\abs{\partial_x} (\varphi^3)_{xx}\bigg\}
= 2 \log\abs{\partial_x} \varphi_x.
\end{equation}
We remark that since the nonlinear term in \eqref{realapproxsqg} is scale-invariant, this approximate
equation has the same anomalous scale-invariance \eqref{sqg_scaling} as the full SQG front equation.

\subsubsection{gSQG equation}
\label{approx:gsqg}

For the gSQG equation with $0<\alpha<1$ or $1 < \alpha < 2$, we have
\[G'(\eta) = -\frac{2 - \alpha}{\eta |\eta|^{2 - \alpha}},\qquad
\L = b_\alpha \modDf^{1-\alpha},
\]
and the approximate equation \eqref{consapprox} for gSQG fronts is
\[\varphi_t(x, t)
+ \frac{1}{6}(2-\alpha)\partial_x \int_\R \frac{[\varphi(x, t) - \varphi(x + \eta, t)]^3}{\abs{\eta}^{4 - \alpha}} \diff{\eta}
+ b_\alpha \abs{\partial_x}^{1 - \alpha} \varphi_x(x, t)
= 0.\]
If $1<\alpha < 2$, then the symbol $a$ is given by \eqref{defaa}, so
\begin{align}
a(k) & = 2(2 - \alpha) \int_0^\infty \frac{1 - \cos(k \eta)}{\abs{\eta}^{4 - \alpha}} \diff{\eta}
 = c_\alpha \abs{k}^{3 - \alpha},
 \label{sqgalphaa}
\end{align}
where
\begin{align}
\begin{split}
c_\alpha &=2(2 - \alpha) \int_0^\infty \frac{1 - \cos\eta}{\eta^{4 - \alpha}} \diff{\eta}
\\
&= 4(2-\alpha)\int_0^\infty \frac{\sin^2(\eta/2)}{\eta^{4 - \alpha}} \diff{\eta}
\\
&= 2(2-\alpha) \sin\left(\frac{\pi\alpha}{2}\right) \Gamma(\alpha-3)
\\
&= \frac{b_\alpha}{3-\alpha},
\end{split}
\label{defC12}
\end{align}
with $b_\alpha$ defined in \eqref{a-const}. The kernel $S$ in \eqref{defS} is given by
\begin{equation}\label{Skernel1a2}\begin{aligned}
S(k_1, k_2, k_3, k_4) & = c_\alpha\left\{\abs{k_1}^{3 - \alpha} + \abs{k_2}^{3 - \alpha} + \abs{k_3}^{3 - \alpha} + \abs{k_4}^{3 - \alpha}\right\}\\
& \quad - \frac 12 c_\alpha\left\{\abs{k_1 + k_2}^{3 - \alpha} + \abs{k_1 + k_3}^{3 - \alpha} + \abs{k_1 + k_4}^{3 - \alpha}\right.\\
& \qquad \left.+ \abs{k_2 + k_3}^{3 - \alpha} + \abs{k_2 + k_4}^{3 - \alpha} + \abs{k_3 + k_4}^{3 - \alpha}\right\}.
\end{aligned}\end{equation}
This function is homogeneous of degree $3-\alpha$. The corresponding operator is
\[\A =c_\alpha \abs{\partial_x}^{3 - \alpha}.\]
Thus, from  \eqref{realapprox}, the approximate SQG equation is
\begin{equation}\label{realapprox1a2}
\varphi_t + \frac {1}{2}c_\alpha \partial_x \bigg\{\varphi^2 \abs{\partial_x}^{3 - \alpha} \varphi
- \varphi \abs{\partial_x}^{3 - \alpha} (\varphi^2) + \frac 13 \abs{\partial_x}^{3 - \alpha} (\varphi^3)\bigg\}
 + b_\alpha \abs{\partial_x}^{1 - \alpha} \varphi_x= 0.
\end{equation}

If $0 < \alpha < 1$, then $a$ is given by \eqref{defab}
instead of \eqref{defaa}, and we get \eqref{realapprox1a2}
with
\begin{equation}
c_\alpha = 2(2 - \alpha)  \int_0^\infty \frac{1 - \frac{1}{2}\eta^2-\cos\eta}{\eta^{4 - \alpha}} \diff{\eta}.
\label{defC01}
\end{equation}

\subsection{Hamiltonian structure}

The approximate equation has the Hamiltonian form
\[\varphi_t + \partial_x \left[\frac{\delta \H}{\delta \varphi}\right] = 0,\]
where, suppressing the time variable, we can write the Hamiltonian in  equivalent forms as
\begin{align*}
\H(\varphi) &= -\frac {1}{6 \cdot 8} \int_{\R^2}\left[\frac{G'(x - x')}{x - x'}\right]
 [\varphi(x) - \varphi(x')]^4 \diff{x}\diff{x'} + \frac 12 \int_\R \varphi(x) \L \varphi(x) \diff{x}
\\
&= \int_\R \bigg[\frac 16 \varphi \A \varphi^3 - \frac 18 \varphi^2 \A \varphi^2\bigg]\diff{x}
+ \frac 12 \int_\R \varphi \L \varphi \diff{x}.
\end{align*}
The spectral form of the Hamiltonian is
\begin{align*}
\H(\hat{\varphi}) &=-\frac {1}{6 \cdot 8} \int_{\R^4} \delta(k_1+k_2+k_3+k_4)
\\
&\qquad\qquad\qquad S(k_1,k_2,k_3,k_4) \hat{\varphi}(k_1)\hat{\varphi}(k_2)
\hat{\varphi}(k_3)\hat{\varphi}(k_4) \diff{k}_1 \diff{k}_2 \diff{k}_3 \diff{k}_4
 \\
&\quad + \frac{1}{2} \int_{\R} b(k) \hat{\varphi}^*(k)\hat{\varphi}(k) \diff{k}.
\end{align*}
This Hamiltonian structure explains the symmetry of $S$ in \eqref{defS}.

For $\alpha\ne 1$, the quadratic term in the Hamiltonian
is proportional to
\[
\int_\R \varphi(x) \modDf^{1-\alpha} \varphi(x) \diff{x},
\]
which controls the homogeneous $\dot{H}^s(\R)$-norm of $\varphi$ with
$s= {(1-\alpha)}/{2}$. The quartic term is proportional to
\[
\int_{\R^2} \frac{\left[\varphi(x)-\varphi(x')\right]^4}{|x-x'|^{4-\alpha}} \diff{x} \diff{x'},
\]
which controls the homogeneous $\dot{W}^{r,4}(\R)$-Slobodeckij norm \cite{taheri} of $\varphi$ with
$r= (3-\alpha)/{4}$.
For $0<\alpha \le 2$, we have $-1/2\le s < 1/2$, $1/4\le r <3/4$, so these norms appear too weak to be useful for well-posedness results.

\section{Local well-posedness for the approximate equation}
\label{Sec-LWP}

In this section, we study the local well-posedness of the initial value problem for the approximate
gSQG front equation with $1<\alpha \le 2$ in \eqref{realapprox1a2} and the approximate
SQG front equation in \eqref{realapproxsqg}. For simplicity,
we consider spatially periodic functions with zero mean.
The analysis for the SQG equation is more delicate than for the gSQG equation, and we
obtain a weaker result in that case, in which solutions may lose Sobolev derivatives over time.

The nonlinear fluxes in these equations appear to involve derivatives, but this is misleading because of a cancelation,
as the the estimates below will show. For smooth solutions, a cartoon of the gSQG equation \eqref{realapprox1a2} with  $1<\alpha \le 2$ is a cubically nonlinear conservation law with a lower-order dispersive term of
order less than one,
\[
\varphi_t + \left(\varphi^3\right)_x + \modDf^{1-\alpha} \varphi_x=0.
\]
Additional logarithmic derivatives arise for the SQG equation \eqref{realapproxsqg}, and for smooth, spatially periodic solutions a rough cartoon of the equation is
\[
\varphi_t + \left(3\varphi^2\Lop \varphi - 4\varphi\Lop \varphi^2 + \Lop\varphi^3\right)_x = \Lop\varphi_x,
\qquad \Lop = \log\modDf.
\]

\subsection{Notation} We denote the Fourier coefficients of a $2\pi$-periodic function (or distribution) $f \colon \T\to \R$ by
\[
\hat{f}(k) = \frac{1}{2\pi} \int_{\T} f(x) e^{-ikx} \diff{x}
\]
and the $\ell^p$-norm of $\hat{f} \colon \Z_* \to \C$ by
\[
\|\hat{f}\|_{\ell^p} = \left(\sum_{k\in\Z_*} |\hat{f}(k)|^p\right)^{1/p},
\]
where $\Z_* = \Z \setminus \{0\}$ the set of nonzero integers.

For $s\in \R$, we let
\[
\dot{H}^s(\T) = \left\{f \colon \T \to \R \mid \text{$\hat{f}(0) = 0$,  $\norm{f}_{\dot{H}^s} < \infty$}\right\}
\]
denote the Hilbert space of zero-mean, periodic functions with square-integrable derivatives of the order $s$, and norm
\begin{align}
\norm{f}_{\dot{H}^s}
&=\left(\sum_{k \in \Z_*} \abs{k}^{2s} |\hat{f}(k)|^2\right)^{1/2}.
\label{Hsnorm}
\end{align}

We will use the following consequence of Young's inequality
\begin{equation}
\sum_{\substack{k_1, k_2, k_3, k_4 \in \Z_*\\ k_1 + k_2 + k_3 + k_4 = 0}}
\left| \hat{f}_1(k_1) \hat{f}_2(k_2) \hat{f}_3(k_3) \hat{f}_4(k_4)\right| \le
\|\hat{f}_1\|_{\ell^2} \|\hat{f}_2\|_{\ell^1}
\|\hat{f}_3\|_{\ell^1} \|\hat{f}_4\|_{\ell^2}
\label{convest}
\end{equation}
and the Sobolev inequality
\begin{equation}
\|\hat{f}\|_{\ell^1} \le  Z(s) \|f\|_{\dot{H}^s}
\qquad \text{for $s > 1/2$},
\label{sobest}
\end{equation}
where $Z$ is given in terms of the Riemann-zeta function by
\[
Z(s) = \left(\sum_{k\in \Z_*} \frac{1}{|k|^{2s}}\right)^{1/2}
= \sqrt{2\zeta(2s)}.
\]

Let $\rho \colon \Z_*^4 \to \Z_*^4$ be a map that permutes its entries and orders their
absolute values. We denote the values of $\rho$ by $(m_1,m_2,m_3,m_4) = \rho(k_1,k_2,k_3,k_4) $,
where
\begin{align}
&(m_1,m_2,m_3,m_4) = (k_{\sigma1},k_{\sigma2},k_{\sigma3},k_{\sigma4})
\quad \text{for some $\sigma\in S_4$},
\label{defkm2}
\\
&\abs{m_1} \geq \abs{m_2} \geq \abs{m_3} \geq \abs{m_4}.
\label{defkm1}
\end{align}
Here, $S_4$ denotes the symmetric group on $\{1, 2, 3, 4\}$.

\subsection{Local well-posedness for the approximate gSQG equation $(1 < \alpha \le 2)$}

In this section, we prove short-time existence and uniqueness for spatially periodic solutions of the initial value
problem for the gSQG equation,
\begin{align}\label{gSQGivp}
\begin{split}
&\varphi_t + \frac {1}{2}c_\alpha \partial_x \bigg\{\varphi^2 \abs{\partial_x}^{3 - \alpha} \varphi
- \varphi \abs{\partial_x}^{3 - \alpha} (\varphi^2) + \frac 13 \abs{\partial_x}^{3 - \alpha} (\varphi^3)\bigg\}
 + b_\alpha \abs{\partial_x}^{1 - \alpha} \varphi_x= 0.
\\
&\varphi(x,0) = \varphi_0(x).
\end{split}
\end{align}
We begin with a general result that is the analog for cubically nonlinear equations of the well-posedness result in  \cite{Hun}
for quadratically nonlinear equations. The proof depends crucially on the symmetry of the interaction coefficients that follows from the Hamiltonian structure of the equation.

Consider the spectral form of an initial value problem for a spatially-periodic function
${\varphi}(x,t)$, with Fourier coefficients $\hat{\varphi}(k,t)$, given by
\begin{align}\label{speceqn}
\begin{split}
& \hat{\varphi}_t(k_1, t)
+ \frac 16 ik_1\sum_{\substack{k_2, k_3, k_4 \in \Z_*\\ k_2 + k_3 + k_4 = -k_1}}S(k_1, k_2, k_3, k_4)
 \hat{\varphi}^*(k_2, t) \hat{\varphi}^*(k_3, t) \hat{\varphi}^*(k_4, t)
 + ik_1b(k_1) \hat{\varphi}(k_1,t) = 0,
\\
&\hat{\varphi}(k_1,0) = \hat{\varphi}_0(k_1).
\end{split}
\end{align}
When convenient, we omit the time variable and write $\hat{\varphi}(k) = \hat{\varphi}(k, t) = \hat{\varphi}^*(-k)$,
$\varphi_j = \varphi(k_j)$.
In \eqref{speceqn}, we
assume that $S \colon \Z_*^4 \to \R$ satisfies
\begin{align}
S(k_1, k_2, k_3, k_4) &= S(-k_1, -k_2, -k_3, -k_4),\label{Scond1}
\\
S(k_1, k_2, k_3, k_4) &= S(k_{\sigma 1}, k_{\sigma 2}, k_{\sigma 3}, k_{\sigma 4})
\quad \text{for every $\sigma\in S_4$}\label{Scond2}
\end{align}
and  that there exist $\mu,\nu \ge 0$ such that
\begin{align}
& \abs{S(k_1, k_2, k_3, k_4)}
\leq C_S\abs{m_3}^{\mu} \abs{m_4}^\nu
\qquad \text{for all $k_1,k_2,k_3,k_4\in \Z_*$},
\label{Scond3}
\end{align}
where $m_1,m_2,m_3,m_4$ are defined as in \eqref{defkm2}--\eqref{defkm1},
and $C_S$ is a constant. That is, the growth of $S$ is bounded by the smaller wavenumbers on which it depends.

\begin{theorem}\label{lwp-gsqg1a2}
Suppose that $S \colon \Z_*^4 \to \R$ satisfies \eqref{Scond1}--\eqref{Scond3},
and $b \colon \Z_* \to \R$ is a bounded, even function. If
\[
s > \max\left\{\mu + \frac{3}{2}, \nu + \frac{1}{2}\right\},
\]
then for every $\varphi_0 \in \dot{H}^s(\T)$, there exists $T > 0$, depending on $\|\varphi_0\|_{\dot{H}^s}$,
such that the initial value problem \eqref{speceqn} has a solution
\[
\varphi \in C\left([0, T]; \dot{H}^s(\T)\right) \cap C^1\left([0, T]; \dot{H}^{s - 1}(\T)\right).
\]
Furthermore, the solution is unique if
\begin{equation}
s > \max\left\{\mu + \frac{3}{2}, \nu + \frac{3}{2}\right\}.
\label{smnineq1}
\end{equation}
\end{theorem}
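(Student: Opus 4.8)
The plan is to combine a Fourier--Galerkin approximation with a single a priori energy estimate in $\dot{H}^s(\T)$. The dispersive term $ik_1b(k_1)\hat{\varphi}(k_1)$ will contribute nothing, since $b$ is real, so the argument is purely "hyperbolic" and the $\dot{H}^s$ bound will rest entirely on the symmetry conditions on $S$. For a smooth solution of \eqref{speceqn} I would differentiate $\|\varphi\|_{\dot{H}^s}^2=\sum_{k\in\Z_*}|k|^{2s}|\hat{\varphi}(k,t)|^2$: the linear term gives $2\Re\sum_k|k|^{2s}(-ikb(k))|\hat{\varphi}(k)|^2=0$, and, using the permutation symmetry \eqref{Scond2} and the symmetry of $\hat{\varphi}^*(k_1)\hat{\varphi}^*(k_2)\hat{\varphi}^*(k_3)\hat{\varphi}^*(k_4)$, the cubic contribution is symmetrized so that the weight $ik_1|k_1|^{2s}$ is replaced by $\tfrac14 i\sum_{j=1}^4 k_j|k_j|^{2s}$; the reality condition \eqref{Scond1} then shows the resulting quadruple sum over $k_1+k_2+k_3+k_4=0$ is purely imaginary, whence
\[
\Big|\tfrac{d}{dt}\|\varphi\|_{\dot{H}^s}^2\Big|\le\frac{1}{12}\sum_{\substack{k_1,k_2,k_3,k_4\in\Z_*\\ k_1+k_2+k_3+k_4=0}}\Big|\sum_{j=1}^4 k_j|k_j|^{2s}\Big|\,|S(k_1,k_2,k_3,k_4)|\prod_{j=1}^4|\hat{\varphi}(k_j)|.
\]
The crux is the elementary commutator bound
\[
\Big|\sum_{j=1}^4 k_j|k_j|^{2s}\Big|\le C_s\,m_1^{2s}\,m_3\qquad\text{on }k_1+k_2+k_3+k_4=0,
\]
where $m_1\ge m_2\ge m_3\ge m_4$ are the ordered absolute values; this follows from the mean value theorem applied to $t\mapsto t^{2s+1}$, splitting into cases by the signs of the two largest wavenumbers and using $|k_i+k_j|\le m_3+m_4$ for the complementary pair. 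Since also $m_1\le 3m_2$, combining this with \eqref{Scond3} bounds the summand by $C\,m_1^sm_2^s\,m_3^{1+\mu}\,m_4^\nu\prod|\hat{\varphi}(k_j)|$; summing over the $24$ orderings and applying \eqref{convest} with the two largest frequencies in the $\ell^2$ factors and the two smallest in the $\ell^1$ factors, then \eqref{sobest} with $s-(1+\mu)>\tfrac12$ and $s-\nu>\tfrac12$, gives $|\tfrac{d}{dt}\|\varphi\|_{\dot{H}^s}^2|\le C\|\varphi\|_{\dot{H}^s}^4$ and a bound on $[0,T]$ with $T\sim\|\varphi_0\|_{\dot{H}^s}^{-2}$. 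The extra unit in the threshold $s>\mu+\tfrac32$ comes precisely from the factor $m_3$ produced by the cancellation.

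For existence I would let $\varphi^N$ solve the projection of \eqref{speceqn} onto $0<|k|\le N$ with data $P_N\varphi_0$. This is a polynomial ODE, and since the truncated index set is symmetric under permutations the estimate above applies verbatim, giving a bound on $\|\varphi^N\|_{C([0,T];\dot{H}^s)}$ uniform in $N$. A parallel multilinear estimate --- now with the $\partial_x$ forced onto a fixed slot, checked by distributing the excess $|k_1|$-weight onto the larger frequencies according as $|k_1|$ is largest, intermediate, or smallest (using $m_1\le 3m_2$ and $|k_1|\le (|k_i||k_j|)^{1/2}$ when $k_1$ is dominated by two other frequencies) --- will show that $\mathcal{N}\colon\dot{H}^s\to\dot{H}^{s-1}$ is bounded and locally Lipschitz under the same hypotheses, so $\partial_t\varphi^N$ is bounded in $C([0,T];\dot{H}^{s-1})$ (the dispersive part contributing $\|b\|_{\ell^\infty}\|\varphi^N\|_{\dot{H}^s}$). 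By Aubin--Lions with the compact embedding $\dot{H}^s(\T)\hookrightarrow\dot{H}^{s'}(\T)$ for $\mu+\tfrac32<s'<s$, a subsequence converges in $C([0,T];\dot{H}^{s'})$ to $\varphi\in C_w([0,T];\dot{H}^s)\cap C^{0,1}([0,T];\dot{H}^{s-1})$ solving the equation; continuity of $\mathcal{N}$ from $\dot{H}^s$ to $\dot{H}^{s-1}$ then upgrades this to $\varphi\in C^1([0,T];\dot{H}^{s-1})$. The improvement from weak to strong continuity in $\dot{H}^s$ is the standard Bona--Smith argument: approximate $\varphi_0$ by trigonometric polynomials, use persistence of higher Sobolev regularity (obtained by running the a priori estimate at every $\dot{H}^{s'}$, $s'\ge s$, where it becomes linear in the top norm), and use the difference estimate from the uniqueness step.

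For uniqueness, given two solutions $\varphi,\psi\in C([0,T];\dot{H}^s)$ with the same data I would set $w=\varphi-\psi$; the difference of the cubic fluxes telescopes into trilinear terms each carrying one factor $w$ and two factors from $\{\varphi,\psi\}$. Estimating $\tfrac{d}{dt}\|w\|_{\dot{H}^{s-1}}^2$ as before (the dispersive part again dropping out), the new feature is that a factor of $w$ may sit on one of the two smallest wavenumbers, where it carries weight $\mu$ or $\nu$; controlling it in $\ell^1$ by $\|w\|_{\dot{H}^{s-1}}$ via \eqref{sobest} requires $s-1>\mu+\tfrac12$ and $s-1>\nu+\tfrac12$, which is exactly \eqref{smnineq1}. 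This yields $\tfrac{d}{dt}\|w\|_{\dot{H}^{s-1}}^2\le C(\|\varphi\|_{\dot{H}^s},\|\psi\|_{\dot{H}^s})\|w\|_{\dot{H}^{s-1}}^2$, and Gr\"onwall finishes.

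The main obstacle will be the a priori estimate, and within it the interplay between the commutator bound on $\sum_j k_j|k_j|^{2s}$ and the bookkeeping that routes the two largest frequencies into the $\ell^2$ (energy) factors while the $\ell^1$ (Sobolev) factors absorb the extra weights $1+\mu$ and $\nu$ on the two smallest: this is where the apparent derivative loss in the conservative flux is defeated by the permutation symmetry of $S$, and the remaining work is a routine, if lengthy, combinatorial verification (the algebraic inequalities presumably being the ones relegated to the Appendix).
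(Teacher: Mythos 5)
Your proposal is correct and follows essentially the same route as the paper: symmetrize the energy identity using the permutation symmetry of $S$, bound the symbol $\sum_j k_j|k_j|^{2s}$ by (a constant times) $|m_1|^s|m_2|^s|m_3|$ via the cancellation on $k_1+k_2+k_3+k_4=0$ (this is the paper's Lemma~\ref{2s+1ineq}), route the two largest frequencies into the $\ell^2$ factors and the two smallest into the $\ell^1$ factors via \eqref{convest}--\eqref{sobest}, and close with Gr\"onwall, Galerkin, and Aubin--Lions. The only cosmetic deviations are that the paper runs the uniqueness/stability estimate in $L^2$ (with the derivative landing on an $\dot{H}^{s-1}$ factor, giving the same threshold \eqref{smnineq1} as your $\dot{H}^{s-1}$ contraction) and obtains strong continuity in $\dot{H}^s$ from weak continuity plus continuity of the norm rather than a Bona--Smith argument.
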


\begin{proof}
We prove the main \emph{a priori} estimates and only sketch the proof, which follows by standard arguments for quasilinear hyperbolic PDEs \cite{Ta}.

Multiplying \eqref{speceqn} by $\hat{\varphi}^*(k_1)$, taking the real part, and using \eqref{Scond2}
to symmetrize the result, we get that
\begin{align}
\begin{split}
\frac{\diff}{\diff{t}} \norm{\varphi}_{\dot{H}^s}^2
&\leq
\frac{1}{12}\sum_{\substack{k_1, k_2, k_3, k_4 \in \Z_*\\ k_1 + k_2 + k_3 + k_4 = 0}}
\biggl|\left(k_1 \abs{k_1}^{2s} + k_2 \abs{k_2}^{2s} + k_3 \abs{k_3}^{2s} + k_4 \abs{k_4}^{2s}\right)\biggr.
\\
&\qquad\qquad\qquad\qquad\qquad\quad \biggl.S(k_1, k_2, k_3, k_4)
\hat{\varphi}(k_1) \hat{\varphi}(k_2) \hat{\varphi}(k_3) \hat{\varphi}(k_4)\biggr|.
\end{split}
\label{energyeqn}
\end{align}
Using Lemma~\ref{2s+1ineq}, the permutation property \eqref{defkm2} of the $m_j$, the symmetry of $S$ in \eqref{Scond2},
and the estimate \eqref{Scond3}, we get that
\begin{align}
\begin{split}
\frac{\diff}{\diff{t}} \norm{\varphi}_{\dot{H}^s}^2 &\leq
\frac{1}{12} C_0(s)\sum_{\substack{k_1, k_2, k_3, k_4 \in \Z_*\\ k_1 + k_2 + k_3 + k_4 = 0}}
\abs{m_1}^s \abs{m_2}^s \abs{m_3} \cdot
\abs{S(k_1, k_2, k_3, k_4) \hat{\varphi}(k_1) \hat{\varphi}(k_2) \hat{\varphi}(k_3) \hat{\varphi}(k_4)}
\\
&\leq
\frac{1}{12} C_S C_0(s)\sum_{\substack{k_1, k_2, k_3, k_4 \in \Z_*\\ k_1 + k_2 + k_3 + k_4 = 0}}\abs{m_1}^s \abs{m_2}^s \abs{m_3}^{\mu + 1} \abs{m_4}^\nu
\abs{ \hat{\varphi}(m_1) \hat{\varphi}(m_2) \hat{\varphi}(m_3) \hat{\varphi}(m_4)}.
\end{split}
\label{tempest}
\end{align}

For fixed $(k_1,k_2,k_3,k_4)\in \Z_*^4$ with corresponding
$(m_1,m_2,m_3,m_4)\in \Z_*^4$, as in \eqref{defkm2}--\eqref{defkm1}, we have
\begin{align*}
&\abs{m_1}^s \abs{m_2}^s \abs{m_3}^{\mu + 1} \abs{m_4}^\nu
\abs{ \hat{\varphi}(m_1) \hat{\varphi}(m_2) \hat{\varphi}(m_3) \hat{\varphi}(m_4)}
\\
&\qquad\qquad \le
\sum_{\substack{(k_1', k_2', k_3', k_4')  =\\ (k_{\sigma1}, k_{\sigma2}, k_{\sigma3}, k_{\sigma4}),\ \sigma\in S_4 }}
\abs{k_1'}^s \abs{k_2'}^s \abs{k_3'}^{\mu + 1} \abs{k_4'}^\nu
\abs{ \hat{\varphi}(k_1') \hat{\varphi}(k_2') \hat{\varphi}(k_3') \hat{\varphi}(k_4')}
\end{align*}
Using this inequality to estimate the sum of terms in \eqref{tempest}
depending on $m_j$ by a sum depending on $k_j$, followed by
the inequalities \eqref{convest}--\eqref{sobest}, we get that
\begin{align*}
\frac{\diff}{\diff{t}} \norm{\varphi}_{\dot{H}^s}^2 &\leq
\frac{4!}{12} C_S C_0(s) \sum_{\substack{k_1, k_2, k_3, k_4 \in \Z_*\\ k_1 + k_2 + k_3 + k_4 = 0}}
\abs{k_1}^s \abs{k_2}^s \abs{k_3}^{\mu + 1} \abs{k_4}^\nu
\abs{ \hat{\varphi}(k_1) \hat{\varphi}(k_2) \hat{\varphi}(k_3) \hat{\varphi}(k_4)}
\\
&\leq  2C_S C_0(s) \norm{\varphi}_{\dot{H}^s}
\|\,|k|^{\mu+1}\hat{\varphi}\|_{{\ell}^1(\Z_*)} \|\,|k|^{\nu}\hat{\varphi}\|_{{\ell}^1(\Z_*)}
\norm{\varphi}_{\dot{H}^s}
\\
&\le C_4(s) \norm{\varphi}_{\dot{H}^s}^4.
\end{align*}
Thus, Gr\"{o}nwall's inequality gives the \emph{a priori} estimate
\begin{equation*}
\norm{\varphi(t)}_{\dot{H}^s}^2 \leq \frac{1}{C_4} \left(\frac{1}
{T_* - t}\right),
\qquad
T_* = \frac{1}{C_4 \norm{\varphi_0}_{\dot{H}^s}^2},
\end{equation*}
and it follows that
\begin{equation}
\sup_{0\le t \le T} \|\varphi(t)\|_{\dot{H}^s}^2 \le \frac{1}{C_4} \left(\frac{1}
{T_* - T}\right),
\label{phibd}
\end{equation}
 for any $0<T<T_*$.

To estimate $\varphi_t$, we write \eqref{speceqn} in spatial form as
\begin{equation}
\varphi_t + \Df \tf(\varphi,\varphi,\varphi) + \L\varphi_x= 0,
\label{phiteq}
\end{equation}
where $\L$ is a bounded operator on $\dot{H}^s(\T)$, and the trilinear operator
$\tf$ is defined in terms of Fourier coefficients
by
\begin{equation}
\hat{\tf}(\hat{\varphi},\hat{\psi},\hat{\chi})(k_1) =
 \frac 16 \sum_{\substack{k_2, k_3, k_4 \in \Z_*\\ k_2 + k_3 + k_4 = -k_1}}S(k_1, k_2, k_3, k_4)
 \hat{\varphi}^*(k_2) \hat{\psi}^*(k_3) \hat{\chi}^*(k_4).
\label{defFform}
\end{equation}
The symmetry of $S$ implies that
\begin{equation*}
q(\eta, \varphi,\psi,\chi) = \int_\T \eta \tf(\varphi,\psi,\chi)\diff{x}
\end{equation*}
is a symmetric form. Moreover, using \eqref{Scond3}, we get that
\begin{align}
\begin{split}
\left|q(\eta, \varphi,\psi,\chi) \right| &\le
\frac{\pi}{3} \sum_{\substack{k_1,k_2, k_3, k_4 \in \Z_*\\ k_1+ k_2 + k_3 + k_4 = 0}}
|S(k_1, k_2, k_3, k_4) \hat{\eta}(k_1) \hat{\varphi}(k_2) \hat{\psi}(k_3) \hat{\chi}(k_4)|
\\
&\le \frac{\pi}{3}C_S\sum_{\substack{k_1,k_2, k_3, k_4 \in \Z_*\\ k_1+ k_2 + k_3 + k_4 = 0}}
|k_1|^{-s}|\hat{\eta}(k_1)|\cdot |k_1|^s |m_3|^\mu |m_4|^\nu |\hat{\varphi}(k_2) \hat{\psi}(k_3) \hat{\chi}(k_4)|.
\end{split}
\label{Fintest}
\end{align}
On $k_1+ k_2 + k_3 + k_4 = 0$, we have
\[
|k_1|^s \le Y(s) \left(|k_2|^{s} + |k_3|^{s} + |k_4|^{s}\right).
\]
From \eqref{defkm1}, we have  for any $1\le p\ne q\le 4$ that
\[
 |m_3|^\mu |m_4|^\nu \le |k_p|^\mu |k_q|^\nu +  |k_p|^\nu |k_q|^\mu.
\]
Choosing $\{p,q\}$ disjoint from $\{1,2\}$, $\{1,3\}$, or $\{1,4\}$ as appropriate, we get that
\begin{align*}
|k_1|^s |m_3|^\mu |m_4|^\nu &\le Y\bigl[
|k_2|^{s}\left(|k_3|^\mu |k_4|^\nu +|k_3|^\nu |k_4|^\mu\right)\bigr.
\\
&\qquad+
|k_3|^{s}\left(|k_2|^\mu |k_4|^\nu +|k_2|^\nu |k_4|^\mu\right)
\\
&\qquad+
\bigl.|k_4|^{s}\left(|k_2|^\mu |k_3|^\nu +|k_2|^\nu |k_3|^\mu\right)\bigr].
\end{align*}
Using this inequality in \eqref{Fintest}, followed by
\eqref{convest}--\eqref{sobest} with the assumption that
\begin{equation}
s > \max\left\{\mu+\frac{1}{2}, \nu+\frac{1}{2}\right\},
\label{sineq1}
\end{equation}
we get
\begin{align*}
\left|q(\eta, \varphi,\psi,\chi) \right|
&\le C \|\eta\|_{\dot{H}^{-s}}  \|\varphi\|_{\dot{H}^s} \|\psi\|_{\dot{H}^s} \|\chi\|_{\dot{H}^s},
\end{align*}
where $C$ denotes a constant. It follows by duality that \eqref{defFform} defines a bounded trilinear map
\begin{equation}
\tf \colon \dot{H}^s(\T)\times\dot{H}^s(\T)\times\dot{H}^s(\T)\to \dot{H}^s(\T)
\label{conF}
\end{equation}
when $s$ satisfies \eqref{sineq1}. Hence, \eqref{phibd}--\eqref{phiteq} imply that
\begin{equation}
\sup_{0\le t \le T} \|\varphi_t\|_{\dot{H}^{s - 1}} \le C.
\label{est2}
\end{equation}

Moreover, if $\varphi$, $\psi$ are solutions of \eqref{phiteq}
with initial data $\varphi(0) = \varphi_0$, $\psi(0) = \psi_0$,
then writing $u=\varphi-\psi$, using the symmetry of $q$, and
the identity
\[
q(\eta_x, \varphi,\psi,\chi)+ q(\eta, \varphi_x,\psi,\chi)+ q(\eta, \varphi,\psi_x,\chi)
+ q(\eta, \varphi,\psi,\chi_x) =0,
\]
we get that
\begin{equation}
 \frac{\diff}{\diff{t}} \|u\|_{L^2}^2
+ 2q(u,u,\varphi,\varphi_x) +  q(u,u,\varphi_x,\psi) + q(u,u,\varphi,\psi_x)+ 2q(u,u,\psi,\psi_x)=0.
\label{stabest}
\end{equation}
For $s> \max\{\mu+1/2, \nu+1/2\}$, it follows as in \eqref{Fintest} that
\[
|q(\eta,\varphi,\psi,\chi)| \le C \|\eta\|_{L^2} \|\varphi\|_{L^2} \|\psi\|_{\dot{H}^s} \|\chi\|_{\dot{H}^s}.
\]
Hence, when $s$ satisfies \eqref{smnineq1}, we have
\[
 \frac{\diff}{\diff{t}} \|u\|_{L^2}^2 \le C\left(\|\varphi\|^2_{\dot{H}^s} + \|\psi\|^2_{\dot{H}^s}\right)
 \|u\|_{L^2}^2,
\]
so Gr\"onwall's inequality gives the \emph{a priori} $L^2$-stability estimate
\begin{equation}
\sup_{0\le t\le T}\left\|\varphi(t)-\psi(t)\right\|^2_{L^2} \le \exp \left[C\int_0^T\left(\|\varphi(t)\|^2_{\dot{H}^s} + \|\psi(t)\|^2_{\dot{H}^s}\right)\diff{t}\right]
\left\|\varphi_0-\psi_0\right\|^2_{L^2}.
\label{L2stab}
\end{equation}

The result then follows by standard methods. We construct Galerkin approximations $\{\varphi^N: N\in \N\}$
by projecting the equations onto Fourier modes with $|k|\le N$. These approximations
satisfy the same estimates as the \emph{a priori} estimates derived above, so from \eqref{phibd} and \eqref{est2}
we can extract a subsequence that converges weakly to a limit $\varphi$
in $L^\infty(0,T; \dot{H}^s(\T)) \cap W^{1,\infty}(0,T; \dot{H}^{s-1}(\T))$.
By the Aubin-Lions lemma, a further subsequence converges strongly in  $C([0,T]; \dot{H}^{s-\epsilon}(\T))$
for sufficiently small $\epsilon>0$, and by the continuity of the nonlinear term in  \eqref{conF}, the limit
is a solution of the equation. The fact that $\varphi \in C([0,T]; \dot{H}^s(\T))$ follows from
 weak continuity $\varphi \in C_w([0,T]; \dot{H}^s(\T))$
and continuity of the norm $\|\varphi\|_{\dot{H}^s}$, and uniqueness follows from \eqref{L2stab} when $s$
satisfies \eqref{smnineq1}.
\end{proof}

Since \eqref{speceqn} is reversible, Theorem~\ref{lwp-gsqg1a2} also holds backward in time, and
a similar result would apply to the spatial case $\varphi(\cdot,t) \colon \R \to \R$.
One could also
prove  continuous dependence of the solution on the initial data by a Bona-Smith type argument,
but we will not carry out the details here.

\begin{theorem}
\label{th:alpha}
Suppose that $1 < \alpha \le 2$ and  $s > {7}/{2} - \alpha$. Then for every $\varphi_0 \in \dot{H}^s(\T)$,
there exists $T > 0$, depending on $\|\varphi_0\|_{\dot{H}^s}$, such that the initial value problem
\eqref{gSQGivp} has a solution with
\[
\varphi \in C\left([0, T]; \dot{H}^s(\T)\right) \cap C^1\left([0, T]; \dot{H}^{s - 1}(\T)\right).
\]
The solution is unique if $s > {5}/{2}$.
\end{theorem}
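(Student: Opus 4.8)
The plan is to deduce Theorem~\ref{th:alpha} directly from Theorem~\ref{lwp-gsqg1a2} by recognizing the initial value problem \eqref{gSQGivp}, which is \eqref{realapprox1a2}, as a special case of the abstract spectral problem \eqref{speceqn}. Passing to Fourier coefficients in \eqref{realapprox1a2}, one obtains \eqref{speceqn} with dispersion symbol $b(k) = b_\alpha|k|^{1-\alpha}$ (with $b_\alpha$ as in \eqref{a-const}) and interaction kernel $S$ given by \eqref{Skernel1a2}, equivalently by formula \eqref{defS} with $a(k) = c_\alpha|k|^{3-\alpha}$, where $c_\alpha$ is as in \eqref{defC12}. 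So the entire task is to verify the hypotheses \eqref{Scond1}--\eqref{Scond3} of Theorem~\ref{lwp-gsqg1a2} and to exhibit admissible exponents $\mu,\nu$.

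The conditions on $b$ and the symmetry conditions on $S$ are immediate. Since $1 < \alpha \le 2$ we have $1-\alpha \le 0$, so $b(k) = b_\alpha|k|^{1-\alpha}$ (and $b(k) = \tfrac12|k|^{-1}$ when $\alpha = 2$) is an even function bounded by $|b_\alpha|$ on $\Z_*$. The kernel \eqref{Skernel1a2} is a function of the moduli $|k_i|$ and $|k_i + k_j|$ alone, hence invariant under $k \mapsto -k$, which gives \eqref{Scond1}; and it is manifestly invariant under permutations of $k_1,k_2,k_3,k_4$, which gives \eqref{Scond2}.

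The substance of the proof, and the step I expect to be the main obstacle, is the bound \eqref{Scond3}: naively \eqref{Skernel1a2} grows like $|m_1|^{3-\alpha}$, and only the exact cancellation recorded in \eqref{kcancel} --- itself a manifestation of the Hamiltonian structure that forces the symmetric form \eqref{defS} --- brings the growth down to the two smallest frequencies. The cleanest route is to return to the integral representation behind $S$. On the constraint set $k_1 + k_2 + k_3 + k_4 = 0$, which is the only place $S$ is evaluated in \eqref{speceqn}, the permutation symmetry of $S$ together with \eqref{defST} identifies $S$ with the quantity $T(k_i,k_j,k_\ell)$ of \eqref{defTint} for \emph{any} three of the four indices; in particular, choosing $\{i,j,\ell\}$ to index the three frequencies of smallest modulus, $S = -\Re\int_\R \tfrac{G'(\eta)}{\eta}\,(1-e^{ik_i\eta})(1-e^{ik_j\eta})(1-e^{ik_\ell\eta})\diff{\eta}$. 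Using $|1-e^{ik\eta}| \le \min(2,|k\eta|)$, the identity $G'(\eta)/\eta = -(2-\alpha)|\eta|^{-(4-\alpha)}$, and splitting the $\eta$-integral at the scales $|\eta| \sim 1/|m_2|,\ 1/|m_3|,\ 1/|m_4|$, one checks that each resulting piece is $\lesssim |m_3|^{2-\alpha}|m_4|$, so \eqref{Scond3} holds with $\mu = 2-\alpha$ and $\nu = 1$. (At the endpoint $\alpha = 2$ a borderline logarithm appears in this crude splitting; there one argues instead directly from the explicit symbol $a(k) = \tfrac12|k|$ by an elementary case analysis on the signs of $k_1,\dots,k_4$ subject to $k_1+k_2+k_3+k_4=0$, which again yields $|S| \lesssim |m_4|$, i.e. \eqref{Scond3} with $\mu = 0 = 2-\alpha$, $\nu = 1$.)

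With $\mu = 2-\alpha \ge 0$ and $\nu = 1$, Theorem~\ref{lwp-gsqg1a2} applies whenever $s > \max\{\mu + \tfrac32,\ \nu + \tfrac12\} = \max\{\tfrac72 - \alpha,\ \tfrac32\}$, which equals $\tfrac72 - \alpha$ for $1 < \alpha \le 2$; this produces the asserted solution in $C([0,T];\dot{H}^s(\T)) \cap C^1([0,T];\dot{H}^{s-1}(\T))$ on a time interval depending only on $\|\varphi_0\|_{\dot{H}^s}$. Likewise, the uniqueness condition \eqref{smnineq1} becomes $s > \max\{\tfrac72 - \alpha,\ \tfrac52\} = \tfrac52$, which is precisely the stated uniqueness threshold, completing the proof.
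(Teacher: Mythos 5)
Your proposal is correct, and its top-level structure coincides with the paper's: Theorem~\ref{th:alpha} is deduced from Theorem~\ref{lwp-gsqg1a2} by checking that $b$ is bounded and that the kernel \eqref{Skernel1a2} satisfies \eqref{Scond1}--\eqref{Scond3} with $\mu=2-\alpha$, $\nu=1$, after which the exponent arithmetic $\max\{\mu+\tfrac32,\nu+\tfrac12\}=\tfrac72-\alpha$ and $\max\{\mu+\tfrac32,\nu+\tfrac32\}=\tfrac52$ is exactly as you state. Where you genuinely diverge from the paper is in the proof of the kernel bound itself. The paper proves this as Lemma~\ref{kernelest1a2} by a purely algebraic route: it writes $S=|m_1|^{3-\alpha}\left[g(x,y)+h(x,y)\right]$ in the scaled variables \eqref{defxy} on the compact feasible region $R$, bounds $g$ explicitly from $a(x)=|x|^{3-\alpha}$, and controls the second-difference term $h$ by the double-integral identity \eqref{int_ts} involving $a''$; the only delicate point is the corner $(x,y)=(1,0)$, handled by the substitution $x=1-\eta y$. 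You instead return to the oscillatory-integral representation \eqref{defTint}, use the permutation symmetry and \eqref{defST} to evaluate $T$ on the three smallest frequencies, and split the $\eta$-integral at the scales $1/|m_2|,1/|m_3|,1/|m_4|$ using $|1-e^{ik\eta}|\le\min(2,|k\eta|)$; this yields the same bound for $1<\alpha<2$ (your four pieces do all come out $\lesssim|m_3|^{2-\alpha}|m_4|$, with a constant degenerating like $(2-\alpha)^{-1}$), and you correctly isolate the endpoint $\alpha=2$, where your splitting produces a spurious $\log(|m_3|/|m_4|)$ and one must instead argue directly from $a(k)=\tfrac12|k|$ by a sign analysis. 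The paper's decomposition buys a single uniform argument valid up to and including $\alpha=2$ (and also exposes exactly which term fails for $0<\alpha<1$, namely $h$); your integral-splitting buys a more conceptual explanation of why the cancellation \eqref{kcancel} reduces the growth to the small frequencies, at the cost of a separate endpoint case. Both are complete proofs.
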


\begin{proof}
From Lemma~\ref{kernelest1a2}, the kernel $S$ for \eqref{gSQGivp}
satisfies \eqref{Scond1}--\eqref{Scond3} with $\mu = 2 - \alpha$ and $\nu = 1$, and the symbol $b$
for \eqref{gSQGivp} is bounded,
so the result follows from Theorem~\ref{lwp-gsqg1a2}.
\end{proof}

The Euler case $\alpha=2$ of this Theorem was proved previously in \cite{Ifr}.

\subsection{Weak local well-posedness for the approximate SQG equation}

Theorem~\ref{lwp-gsqg1a2} does not apply to the approximate SQG equation \eqref{realapproxsqg},
because its kernel \eqref{Skernelsqg} does not satisfy the estimate in \eqref{Scond3}.
Instead, there is an additional logarithmic factor and, in the absence of dispersion, the nonlinear term appears to lead to a loss of derivatives
at some finite rate.

In this section, we prove a weak local well-posedness theorem for the initial value problem
\begin{align}
\label{sqgivp}
\begin{split}
&\varphi_t + \frac 12 \partial_x \bigg\{\varphi^2 \log\abs{\partial_x} \varphi_{xx}
- \varphi \log\abs{\partial_x} (\varphi^2)_{xx} + \frac 13 \log\abs{\partial_x} (\varphi^3)_{xx}\bigg\}
+\Lop\varphi_x = 0,
\\
&\varphi(x,0) = \varphi_0(x),
\end{split}
\end{align}
where $\Lop$ is an arbitrary self-adjoint operator. The case $\Lop = -2 \log\abs{\partial_x}$ corresponds to the approximate SQG front equation.

Our proof is adapted from proofs for Gevrey-class solutions of nonlinear evolution
equations (see e.g., \cite{FrVi, KuTeViZi}), in which one uses time-dependent norms to compensate
for the loss of regularity. The difference here is that, since there is only a logarithmic derivative
loss, we obtain solutions for initial data with finitely many derivatives, rather than $C^\infty$ Gevrey-class initial data.
The existence time in the theorem depends on the number
of Sobolev derivatives possessed by the initial data as well as its Sobolev norm.

In addition to $\dot{H}^s(\T)$, we use a logarithmically-modified Hilbert space
\begin{align}
\begin{split}
\dot{H}^s_{\log}(\T) &= \left\{f \colon \T \to \R \mid \text{$\hat{f}(0) = 0$, $\norm{f}_{\dot{H}_{\log}^s} < \infty$}\right\},
\\
\norm{f}_{\dot{H}_{\log}^s} &= \left[\sum_{k \in \Z_*} \log(1 + |k|) \cdot \abs{k}^{2s} \abs{\hat{f}(k)}^2\right]^{1/2}.
\end{split}
\label{Hslog}
\end{align}
If $\tau \colon [0,T_*)\to [0,\infty)$ is a decreasing function, then we denote by $L^\infty(0,T_*; \dot{H}^\tau(\T))$ the space of functions
\[
\varphi \colon [0,T_*)  \to \bigcup_{t\in [0,T_*)} \dot{H}^{\tau(t)}(\T)
\]
such that $\varphi(t) \in \dot{H}^{\tau(t)}(\T)$, and for every $0<T<T_*$
\[
\varphi\in L^\infty(0,T; \dot{H}^{\tau_1}(\T))\qquad \text{$\tau_1 = \tau(T)$},
\]
with analogous notation for other time-dependent Sobolev spaces.

\begin{theorem}
\label{sqglwpthm}
Let the operator $\Lop$ have real-valued symbol $b : \Z \to \R$ and suppose that $\tau_0 > 5/2$.
For every $\varphi_0 \in \dot{H}^{\tau_0}(\T)$, there exists $T_* > 0$ and a
differentiable, decreasing function
$\tau \colon [0,T_*) \to (5/2,\tau_0]$ with $\tau(0) = \tau_0$, depending on $\tau_0$ and
$\|\varphi_0\|_{\dot{H}^{\tau_0}(\T)}$, such that
the initial value problem \eqref{sqgivp} has a solution with
\[
\varphi \in L^\infty(0, T_*; \dot{H}^{\tau}(\T)) \cap L^2(0, T_*; \dot{H}^{\tau}_{\log}(\T)).
\]
Moreover, there exists a numerical constant $C>0$ such that
\begin{equation}\label{sqgest}
\sup_{t \in [0, T]} \norm{\varphi(t)}_{\dot{H}^{\tau(t)}}^2
+ C \norm{\varphi_0}_{\dot{H}^{\tau_0}}^2\int_0^{T} \norm{\varphi(t)}_{\dot{H}^{\tau(t)}_{\log}}^2 \diff{t}
\leq \norm{\varphi_0}_{\dot{H}^{\tau_0}}^2
\end{equation}
for every $0<T<T_*$, where the norms are defined in  \eqref{Hsnorm}, \eqref{Hslog}. The solution is unique while
$\tau(t) > 9/2$.
\end{theorem}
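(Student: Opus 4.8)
The plan is to run the same hyperbolic energy argument as in Theorem~\ref{lwp-gsqg1a2}, but with a time-dependent index $\tau(t)$ chosen so that the $L^2$-in-time norm in $\dot{H}^\tau_{\log}$ that appears on the good side of the energy estimate absorbs the single logarithmic derivative that the nonlinear flux of \eqref{sqgivp} loses. First I would write \eqref{sqgivp} in spectral form \eqref{speceqn} with kernel $S$ given by \eqref{Skernelsqg}, which by the cancelation \eqref{kcancel} is homogeneous of degree $2$ on $k_1+k_2+k_3+k_4=0$ and, by an analog of Lemma~\ref{kernelest1a2}, satisfies $|S(k_1,k_2,k_3,k_4)|\le C|m_3|^{2}\log(1+|m_3|)$ (a logarithmically-corrected version of \eqref{Scond3} with $\mu=2$, $\nu=0$). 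The key computation is to differentiate $\|\varphi(t)\|_{\dot H^{\tau(t)}}^2$ in time: this produces the usual nonlinear term plus the extra term $\dot\tau(t)\sum_k \log|k|\,|k|^{2\tau}|\hat\varphi(k)|^2 = \dot\tau(t)\,\|\varphi\|_{\dot H^\tau_{\log}}^2$ (up to lower-order corrections from $\log(1+|k|)$ versus $\log|k|$). Since $\tau$ is decreasing, $\dot\tau<0$, so this is a \emph{negative} term of exactly the size needed.

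Next I would estimate the nonlinear term. As in \eqref{energyeqn}--\eqml{tempest}, symmetrize using \eqref{Scond2}, apply Lemma~\ref{2s+1ineq} to bound $|k_1|k_1|^{2\tau}+\cdots+k_4|k_4|^{2\tau}|\le C|m_1|^\tau|m_2|^\tau|m_3|$, insert the bound on $S$, and reduce the sum over the ordered $m_j$ to a sum over $k_j$ by the permutation trick. This leaves a quadrilinear sum with weights $|k_1|^\tau|k_2|^\tau|k_3|^{3}\log(1+|k_3|)$; one factor of $\log(1+|k_3|)^{1/2}$ together with $|k_3|^{3}$ should be split so that, after \eqref{convest}, one obtains $C\|\varphi\|_{\dot H^\tau}^2\,\||k|^{5/2}\log(1+|k|)^{1/2}\hat\varphi\|_{\ell^1}\,\|\hat\varphi\|_{\ell^1}$, and then \eqref{sobest} applied with indices strictly above $1/2$ controls $\|\hat\varphi\|_{\ell^1}\le Z\|\varphi\|_{\dot H^\tau}$ (using $\tau>5/2$) and $\||k|^{5/2}\log(1+|k|)^{1/2}\hat\varphi\|_{\ell^1}\le C\|\varphi\|_{\dot H^\tau_{\log}}$ provided $\tau>5/2+\epsilon$ — this last step is precisely where the logarithmic norm and the condition $\tau>5/2$ enter. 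Combining, one gets
\begin{equation*}
\frac{\diff}{\diff t}\|\varphi\|_{\dot H^{\tau(t)}}^2 + |\dot\tau(t)|\,\|\varphi\|_{\dot H^{\tau}_{\log}}^2 \le C\|\varphi\|_{\dot H^\tau}^2\,\|\varphi\|_{\dot H^\tau_{\log}}^2 .
\end{equation*}
Choosing $\tau$ so that $|\dot\tau(t)| = C\|\varphi_0\|_{\dot H^{\tau_0}}^2$ (a linear, decreasing function, valid until $\tau$ hits $5/2$, which defines $T_*$) and using that $\|\varphi(t)\|_{\dot H^\tau}\le\|\varphi_0\|_{\dot H^{\tau_0}}$ as long as the estimate persists, the right-hand side is dominated by the second term on the left, yielding $\frac{d}{dt}\|\varphi\|_{\dot H^{\tau(t)}}^2\le 0$ plus a spare $C\|\varphi_0\|^2\int\|\varphi\|_{\dot H^\tau_{\log}}^2$, which is exactly \eqref{sqgest}. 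I would also handle the linear term $\Lop\varphi_x$: since $\Lop$ has a real symbol, $\langle\varphi,\Lop\varphi_x\rangle$ is antisymmetric and contributes nothing to the $L^2$ energy, and at level $\dot H^\tau$ it likewise drops out because $\Lop$ commutes with $|\partial_x|$.

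Finally, existence follows by the same Galerkin/Aubin--Lions scheme as in Theorem~\ref{lwp-gsqg1a2}: the truncated systems satisfy \eqref{sqgest} uniformly, giving uniform bounds in $L^\infty(0,T;\dot H^{\tau_1})\cap L^2(0,T;\dot H^{\tau_1}_{\log})$ for each $0<T<T_*$ with $\tau_1=\tau(T)$; extract a weakly convergent subsequence, upgrade to strong convergence in $C([0,T];\dot H^{\tau_1-\epsilon})$, and pass to the limit using boundedness of the trilinear flux (the analog of \eqref{conF}, with one index shifted by the log). For uniqueness I would run an $L^2$ difference estimate as in \eqref{stabest}--\eqref{L2stab}; the difference $u=\varphi-\psi$ now satisfies $\frac{d}{dt}\|u\|_{L^2}^2\le C(\|\varphi\|_{\dot H^\tau_{\log}}^2+\|\psi\|_{\dot H^\tau_{\log}}^2)\|u\|_{L^2}^2$, and the extra derivatives the SQG flux demands on the smooth solutions force $\tau>9/2$ (two more than the gSQG threshold $5/2$, reflecting the degree-$2$ homogeneity plus the log) for the $\dot H^\tau_{\log}$ norms of $\varphi,\psi$ to control things; since $\int_0^T\|\varphi\|_{\dot H^\tau_{\log}}^2\,dt<\infty$ by \eqref{sqgest}, Gr\"onwall closes it. The main obstacle is the bookkeeping in the nonlinear estimate: one must verify that after symmetrization and the $m_j\to k_j$ reduction, the log weight lands on a \emph{single} low-frequency factor that can be paired with the $\dot H^\tau_{\log}$ norm, rather than being spread across factors in a way that would require $\log$-derivatives of the high-frequency factors — this is the content of the SQG analog of Lemma~\ref{2s+1ineq}, and getting the split of $|k_3|^3\log(1+|k_3|)$ to respect the $\ell^1$--$\ell^2$ pairing in \eqref{convest} is the delicate point.
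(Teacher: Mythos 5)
Your overall architecture---a decaying index $\tau(t)$, the two energies $E=\norm{\varphi}_{\dot{H}^{\tau}}^2$ and $F=\norm{\varphi}_{\dot{H}^{\tau}_{\log}}^2$, absorption of the nonlinear term by the $\dot{\tau}F$ term, and a Galerkin construction---is the paper's. But the key technical ingredient, the kernel estimate, is wrong, and the error sits exactly at the point you flag as delicate. You posit $\abs{S(k_1,k_2,k_3,k_4)}\le C\abs{m_3}^{2}\log(1+\abs{m_3})$, i.e.\ a logarithm of the \emph{third-largest} wavenumber. This is false: taking $k_1=k+a$, $k_2=-(k+b)$, $k_3=-a$, $k_4=b$ with $a,b>0$ fixed and $k\to\infty$, a direct computation with $a(k)=-k^2\log\abs{k}$ gives $\abs{S}=2ab\log\abs{k}+\O(1)$, which is unbounded while $\abs{m_3}^{2}\log(1+\abs{m_3})$ stays fixed (the paper records precisely this example after Lemma~\ref{kernelestsqg}). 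Moreover, if your bound held, then $\abs{S}\le C_\epsilon\abs{m_3}^{2+\epsilon}$ would satisfy \eqref{Scond3} with $\mu=2+\epsilon$, $\nu=0$, and Theorem~\ref{lwp-gsqg1a2} would already give well-posedness with \emph{no} loss of derivatives---contradicting the reason this theorem exists. The correct estimate (Lemma~\ref{kernelestsqg} and Corollary~\ref{kernelestsqg-int}) is $\abs{S}\le C_2\abs{m_3}\abs{m_4}\bigl[\log(1+\abs{m_1})\log(1+\abs{m_2})\bigr]^{1/2}$: the logarithm unavoidably involves the \emph{large} wavenumbers. So the resolution is the opposite of what you propose: the square-rooted logarithms attach to the two high-frequency factors $\abs{k_j}^{\tau}\hat{\varphi}(k_j)$, $j=1,2$, which occupy the $\ell^2$ slots of \eqref{convest} and together produce the full log-norm $F$; the $\ell^1$ slots carry only $\abs{k}^{2}$ and $\abs{k}$, whence $\tau>5/2$. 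This is why \eqref{sqgenergyest1} reads $\dot{E}\le 2[\dot{\tau}+C_3(\tau)E]F$ and why the squared log-norm appears under the time integral in \eqref{sqgest}. (A secondary slip in the same vein: your bound $\|\,\abs{k}^{5/2}\log^{1/2}(1+\abs{k})\hat{\varphi}\|_{\ell^1}\le C\norm{\varphi}_{\dot{H}^{\tau}_{\log}}$ requires $\tau>3$ by Cauchy--Schwarz, not $\tau>5/2+\epsilon$.)

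Your uniqueness step also would not close as written. A plain $L^2$ Gr\"onwall estimate for $u=\varphi-\psi$ fails because the symmetrized difference terms contain $S$ evaluated with two large wavenumbers carried by $\hat{u}$, producing a logarithm of the frequencies of $u$ itself that $\norm{u}_{L^2}$ cannot absorb, no matter how smooth $\varphi$ and $\psi$ are. The paper instead reruns the shrinking-index mechanism for the difference, at the level $\dot{H}^{\tau(t)-2}$, pairing $\norm{u}_{\dot{H}^{\tau-2}}$ against $\norm{u}_{\dot{H}^{\tau-2}_{\log}}$ and taking $M$ large in \eqref{taueq} so that the coefficient of the log-norm of $u$ is nonpositive; the requirement $\tau-2>5/2$ is where the threshold $\tau>9/2$ actually comes from.
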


\begin{proof}
First, we derive the \emph{a priori} estimate \eqref{sqgest}. Let
$\tau \colon [0,T]\to (5/2,\infty)$
be a differentiable
function, and let $\varphi$ be a smooth solution of \eqref{sqgivp}. We define energies
$E, F \colon [0,T] \to [0,\infty)$
by
\begin{align*}
E(t) &=  \norm{\varphi(t)}_{\dot{H}^{\tau(t)}}^2
= \sum_{k \in \Z_*} \abs{k}^{2 \tau(t)} \abs{\hat{\varphi}(k,t)}^2,
\\
F(t) &= \norm{\varphi(t)}_{\dot{H}^{\tau(t)}_{\log}}^2
= \sum_{k \in \Z_*} \log(1+\abs{k}) \cdot \abs{k}^{2\tau(t)} \abs{\hat{\varphi}(k,t)}^2.
\end{align*}

We write the equation in the spectral form \eqref{specapprox} with kernel \eqref{Skernelsqg}.
Using the energy equation \eqref{energyeqn}, Lemma~\ref{2s+1ineq}, and
Corollary~\ref{kernelestsqg-int} to estimate the time-derivative of $E$, we get that
\begin{align*}
\frac{\diff E}{\diff{t}} & = 2 \dot{\tau} \sum_{k \in \Z_*} \log\abs{k}
\cdot \abs{k}^{2\tau} \abs{\hat{\varphi}(k)}^2
+ \sum_{k \in \Z_*} \abs{k}^{2\tau} \frac{\diff}{\diff{t}} \abs{\hat{\varphi}(k)}^2
\\
& \leq 2 \dot{\tau} F + \frac{1}{12} \sum_{\substack{k_1, k_2, k_3, k_4 \in \Z_*\\ k_1 + k_2 + k_3 + k_4 = 0}}
\left|\left(k_1 \abs{k_1}^{2 \tau} + k_2 \abs{k_2}^{2 \tau} + k_3 \abs{k_3}^{2 \tau} + k_4 \abs{k_4}^{2 \tau}\right)S(k_1, k_2, k_3, k_4)
\hat{\varphi}_1 \hat{\varphi}_2 \hat{\varphi}_3 \hat{\varphi}_4\right|
\\
& \leq 2 \dot{\tau} F + \frac{4! }{12} C_0(\tau) C_2
\sum_{\substack{k_1, k_2, k_3, k_4 \in \Z_*\\ k_1 + k_2 + k_3 + k_4 = 0}}
\left[\log(1 + |k_1|) \log(1 + |k_2|)\right]^{1/2}  \abs{k_1}^\tau \abs{k_2}^\tau \abs{k_3}^2 \abs{k_4}
\cdot \abs{\hat{\varphi}_1 \hat{\varphi}_2 \hat{\varphi}_3 \hat{\varphi}_4}
\\
& \leq 2 \dot{\tau} F + 2C_0(\tau) C_2F
\cdot \left(\sum_{k_3 \in \Z_*} \abs{k_3}^2 \abs{\hat{\varphi}(k_3)}\right)
\cdot \sum_{k_4 \in \Z_*} \abs{k_4} \abs{\hat{\varphi}(k_4)},
\end{align*}
where a dot denotes a time derivative.
Then, as long as $\tau > 5 / 2$, the Sobolev inequality \eqref{sobest} implies that
\begin{equation}
\label{sqgenergyest1}
\frac{\diff E}{\diff{t}} \leq 2 \left[\dot{\tau} + C_3(\tau) E\right] F,\qquad
C_3(s) = C_0(s) C_2 Z(s-1) Z(s-2).
\end{equation}
The function $C_3 \colon (5/2,\infty) \to (0,\infty)$ is a smooth function such that $C_3(s) \to \infty$ as  $s\to 5/2$
and $s\to \infty$. Thus, there is a numerical constant $C_4>0$ such that
\[
C_3(s) \ge C_4\qquad \text{for $5/2<s<\infty$}.
\]
For example, if $C_0$, $C_2$ are  given by \eqref{numC0}, \eqref{numC2}, then we find numerically that
one can take $C_4=1000$.

Fix a constant  $M > 1$ and let $\tau$ be the solution of the initial value problem
\begin{equation}
\dot{\tau} + M E_0 C_3 (\tau) = 0,\qquad \tau(0) = \tau_0
\label{taueq}
\end{equation}
on a maximal time-interval $[0,T_*)$ such that $\tau(t)>5/2$, where $E_0 = E(0)$. Then
it follows from \eqref{sqgenergyest1}--\eqref{taueq} that $E$ is decreasing on $[0,T_*)$ and
\begin{equation*}
\frac{\diff E}{\diff{t}}  + (M-1) C_3 E_0 F \le 0.
\end{equation*}
Gr\"onwall's inequality gives
\[
E(t) + E_0 (M-1)  \int_0^t C_3(\tau(s)) F(s) \diff{s} \leq E_0,
\]
so \eqref{sqgest} follows for $0\le T<T_*$ with $C = (M-1) C_4$.

We define a trilinear form $\tf$ by \eqref{defFform} where $S$ is given by  \eqref{Skernelsqg}. By a similar argument to the one
in the proof of Theorem~\ref{lwp-gsqg1a2}, using Corollary~\ref{kernelestsqg-int}, we see that
$\tf \colon \dot{H}^s(\T) \times \dot{H}^s(\T) \times \dot{H}^s(\T)\to \dot{H}^s(\T)$ is bounded for $s>1/2$.
It follows from the equation for $\varphi$ and \eqref{sqgest} that if $0<T<T_*$, then
\[
\sup_{0\le t \le T} \|\varphi_t(t)\|_{\dot{H}^{\tau_1-1}} \le C
\]
where $\tau_1 = \tau(T)$, for some constant $C$ depending on $\tau_0$, T, and $E_0$.

The construction of the solution by the use of Galerkin approximations follows by standard arguments, as in the proof of Theorem~\ref{lwp-gsqg1a2},
and we omit the details.

Finally, if $\varphi$, $\psi$ are solutions \eqref{sqgivp} with initial data $\varphi(0) = \varphi_0$, $\psi(0) = \psi_0$,
then we let $\tau$ be the solution of \eqref{taueq} with
$E_0=\max\{\|\varphi_0\|^2_{H^{\tau_0}}, \|\psi_0\|^2_{H^{\tau_0}}\}$,
and we define
\[
U(t) = \|\varphi(t)-\psi(t)\|_{H^{\tau(t)-2}},\qquad V(t) =  \|\varphi(t)-\psi(t)\|_{H_{\log}^{\tau(t)-2}},
\]
where we assume that ${\tau(t)-2} > 5/2$. Then
a similar argument to the derivation of the energy estimate \eqref{sqgenergyest1} and the stability estimate \eqref{stabest},
whose details we omit, gives that
\begin{align*}
\frac{dU}{dt} &\le  \left[2 \dot{\tau} + E_0 C(\tau)\right]  V
+ C(\tau)\left(\|\varphi\|_{H^{\tau}_{\log}}+\|\psi\|_{H^{\tau}_{\log}}\right) U,
\end{align*}
where $C(\tau)>0$ is a continuous function of $\tau$. If $0<T<T_*$, then \eqref{taueq} implies that $\tau(t)$ is bounded
independently of $M$ on a time-interval $0\le t \le T/M$. We choose $M$ large enough that $M C_3(\tau) \ge C(\tau)$
on this interval. Then
\[
\frac{dU}{dt} \le   C(\tau)\left(\|\varphi\|_{H^{\tau}_{\log}}+\|\psi\|_{H^{\tau}_{\log}}\right) U
\]
for $0\le t \le T/M$, and Gr\"onwall's inequality implies that the solution is unique.
\end{proof}

\section{Traveling waves and the NLS equation}
\label{sec:nls}

We look for periodic, zero-mean traveling wave solutions of \eqref{sqg_eq} of the form
\[
\varphi = \varphi(kx-\omega t),\qquad  \varphi(\theta + 2\pi) = \varphi(\theta).
\]
These traveling waves satisfy
\begin{align*}
&k \Lop \varphi - \omega \varphi
+ \frac{1}{2} k \left\{\varphi^2 \Aop\varphi - \varphi\Aop\varphi^2
+\frac{1}{3}\Aop\varphi^3\right\} = 0,
\end{align*}
where
\begin{align*}
\Aop e^{in\theta} &= a(nk) e^{in\theta},\qquad  \Lop e^{in\theta} = b(nk) e^{in\theta},
\\
a(k) &= \begin{cases}
\frac{1}{2}|k| & \text{if $\alpha = 2$ (Euler)},
\\
c_\alpha |k|^{3-\alpha} & \text{if $0<\alpha < 1$ or $1 < \alpha < 2$},
\\
-k^2\log|k|  & \text{if $\alpha = 1$ (SQG)},
\end{cases}
\\
b(k) &=  \begin{cases}
\frac{1}{2}|k|^{-1} & \text{if $\alpha = 2$ (Euler)},
\\
b_\alpha |k|^{1-\alpha} & \text{if $0<\alpha < 1$ or $1 < \alpha < 2$},
\\
-2 \log |k| & \text{if $\alpha = 1$ (SQG)}.
\end{cases}
\end{align*}
The existence of an analytic branch of small-amplitude traveling waves follows from
the Crandall-Rabinowitz theorem for bifurcation from a simple eigenvalue \cite{Ze}.

A Fourier expansion for small-amplitude solutions of the form
\begin{align*}
\varphi(\theta;\epsilon) = \sum_{n=0}^\infty \epsilon^{2n+1} \psi_{2n+1} e^{i(2n+1)\theta} + \text{c.c.},
\qquad
\omega(\epsilon) &= \sum_{n=0}^\infty \epsilon^{2n} \omega_{2n}
\end{align*}
gives
\begin{align}
\begin{split}
\omega_0 &= k b(k) = \begin{cases}
\frac{1}{2}\sgn k & \text{if $\alpha = 2$ (Euler)},
\\
b_\alpha k|k|^{1-\alpha} & \text{if $0<\alpha < 1$ or $1 < \alpha < 2$},
\\
-2 k\log |k| & \text{if $\alpha = 1$ (SQG)},
\end{cases}
\\
\omega_2 &= \sigma_2|\psi_1|^2,
\qquad
\sigma_2 = \frac{1}{2}k \left[4a(k) - a(2k)\right].
\end{split}
\label{defw0}
\end{align}
In addition, one finds that
\[
\psi_3 = \frac{1}{2}\left[ \frac{a(k) - a(2k) + \frac{1}{3}a(3k)}{b(k) - b(3k)}\right] \psi_1^3.
\]

We remark that in the case of the approximate equation \eqref{approx2} for Euler,
with $\alpha = 2$ and $a(k) = |k|/2$, we get that
\[
a(k) - a(2k) + \frac{1}{3}a(3k) = 0,
\]
so $\psi_3 =0$. In fact,  \eqref{approx2} has an exact harmonic traveling wave solution
\[
\varphi = \psi e^{ikx-i\omega t} + \text{c.c.},\qquad \omega = \frac{1}{2}\left(1 + k^2 |\psi|^2\right)\sgn k.
\]
The coefficient $\psi_3$ is nonzero for $0<\alpha<2$, and presumably there is no simple explicit solution
for the traveling waves in that case.

If $0<\alpha<2$, then the linearized wave motion is dispersive, and the NLS approximation for \eqref{sqg_eq} is
\begin{align*}
&\varphi(x,t) = \epsilon \psi\left(\epsilon(x - \omega_0' t),\epsilon^2t\right) e^{ikx-i\omega_0 t} +\text{c.c.}
+ \O(\epsilon^3) \qquad \text{as $\epsilon \to 0$},
\end{align*}
where a prime denotes the derivative with respect to $k$ and $\psi(X,T)$ satisfies
\[
i \psi_T = -\frac{1}{2} \omega_0'' \psi_{XX} + \sigma_2 |\psi|^2 \psi.
\]
The same NLS equation follows from the full equation \eqref{nonconseqn},
since it depends only on the cubic part the nonlinearity.

From  \eqref{defw0}, we have for $k>0$ that $\sigma_2 > 0$, and
\begin{equation*}
\omega_0'' = \begin{cases}
b_\alpha (2-\alpha)(1-\alpha) k^{-\alpha} & \text{if $0<\alpha < 1$ or $1 < \alpha < 2$},
\\
-2/k& \text{if $\alpha = 1$ (SQG)}.
\end{cases}
\end{equation*}
Equation \eqref{a-const} implies that  $b_\alpha > 0$ for $1<\alpha<2$ and $b_\alpha <0$ for $0<\alpha<1$, so
$\omega_0'' < 0$. Hence, $\omega_0'' \sigma_2 < 0$, and the NLS equation is focusing for
all $0< \alpha < 2$. It follows that  small-amplitude wavetrains on the front are modulationally unstable, and the
front supports envelope solitons.

\section{Numerical solutions}
\label{sec:num}

In this section, we show two numerical solutions of the initial value problem
for the approximate SQG front equation in \eqref{sqgivp} that indicate the formation of singularities in finite time.

The first solution is for the initial data
\begin{equation}
\varphi_0(x) = \cos(x+\pi) + \frac{1}{2}\cos[2(x+\pi +2\pi^2)].
\label{two_cos_ic}
\end{equation}
A surface plot of the solution, computed using a pseudo-spectral method with spectral viscosity, is shown in
Figure~\ref{fig:two_cos_surf}. Numerical results suggest that an oscillatory singularity forms at
$t\approx 0.06$ near $x\approx 2.15$, before there is an appreciable change in the global shape of the solution. The solution appears to be
smooth before the singularity forms, and the numerical singularity formation time does not appear to change under further refinement.
Moreover the structure of the solution remains similar as one increases the number of Fourier modes, although the number of oscillations and the $x$-location of their left endpoint increases.

One might conjecture that the formation of singularities in the approximate SQG front equation is associated with the breaking and filamentation of the front, rather than a loss of smoothness, but since we are using a graphical description of the front, we are unable to distinguish between the two. The numerical
solutions suggest that it may be possible to continue smooth solutions of  \eqref{sqgivp} by some type of weak solution after singularities
form. These weak solutions appear to remain continuous, which could be associated with the extreme thinness of any filaments
that form, as seems to occur in the case of the filamentation of vorticity fronts \cite{BiHu, BiHu1}.

In Figure~\ref{fig:sech}--\ref{fig:sech_detail}, we show a solution of \eqref{sqgivp} with the initial data
\begin{equation}
\varphi_0(x) = \sech^2\left[\frac{5(x-\pi)}{2}\right].
\label{sech_ic}
\end{equation}
for $0\le t \le 0.05$. The singularity formation time is $t\approx 0.02$. As in the previous case, a singularity forms before there
is an appreciable change in the global shape of the solution, but in this case singularities form at two different locations, the first
near the peak of the pulse and then, a little later,  a second near the front of the pulse.

\begin{figure}
\includegraphics[width=0.7\textwidth]{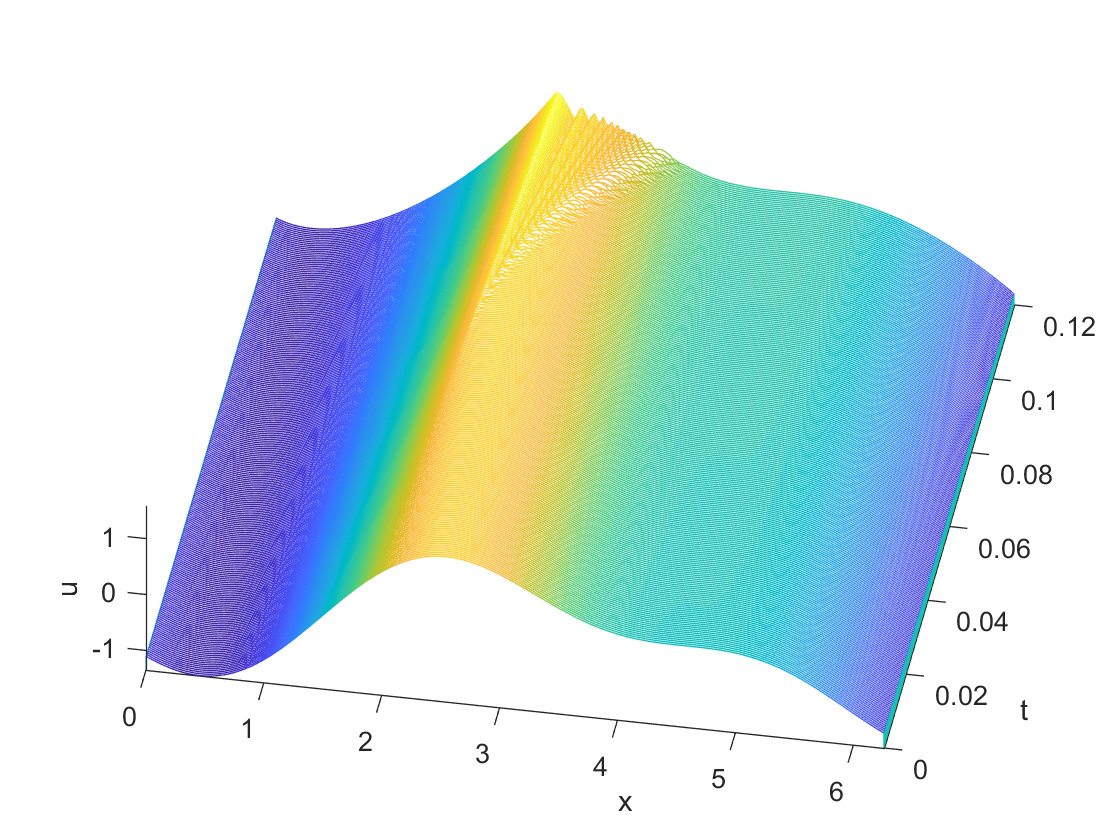}
\caption{A surface plot of the solution of \eqref{sqgivp} with initial data \eqref{two_cos_ic} for
$0\le t \le 0.12$. The solution is computed by a pseudo-spectral method with $2^{14}$ Fourier modes.
A small oscillatory singularity forms at
$t\approx 0.06$ near $x\approx 2.15$}
\label{fig:two_cos_surf}
\end{figure}

\begin{figure}
\includegraphics[width=0.7\textwidth]{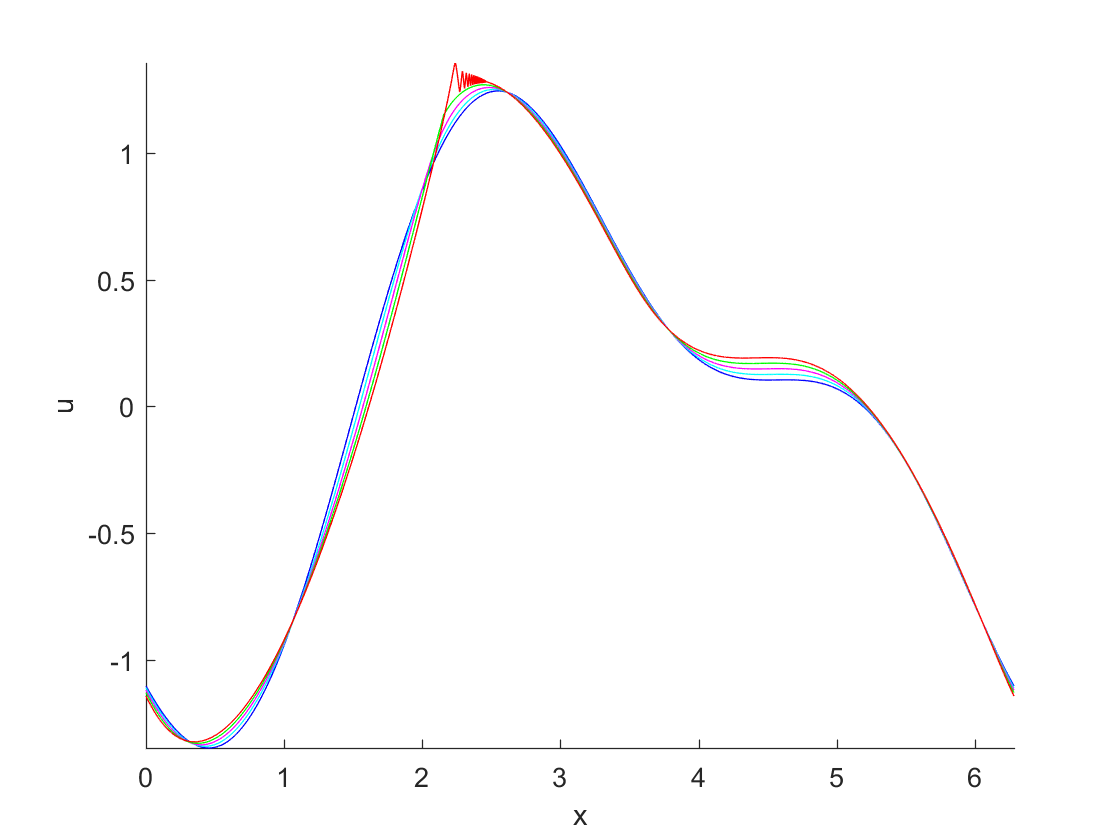}
\caption{Graphs of the solution of \eqref{sqgivp} with initial data \eqref{two_cos_ic} for
 The solution is shown at
$t=0$ (blue), $t= 0.1875$ (cyan), $t= 0.375$ (magenta), $t= 0.5625$ (green), $t = 0.75$ (red). The solution is computed by a pseudo-spectral method with $2^{15}$ Fourier modes.}
\label{fig:two_cos_graph}
\end{figure}

\begin{figure}
\includegraphics[width=0.7\textwidth]{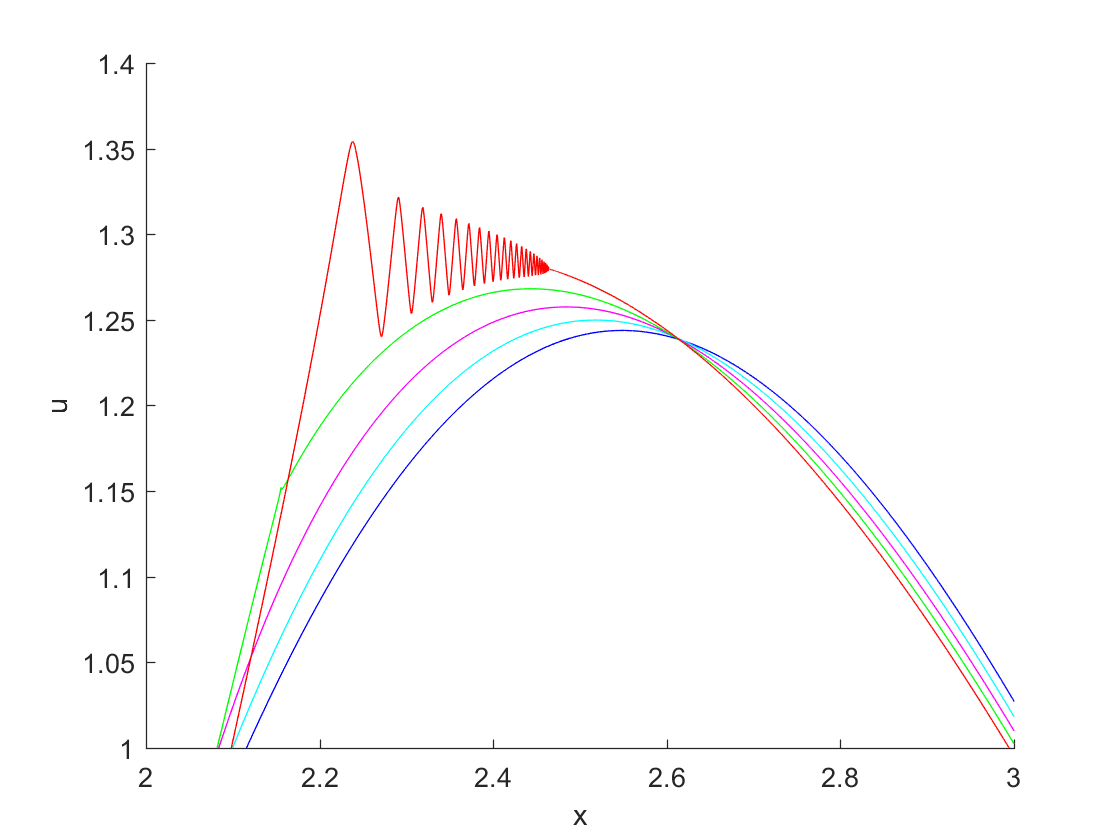}
\caption{Detail of singularity formation in the solution of \eqref{sqgivp} with initial data \eqref{two_cos_ic}
shown in Figure~\ref{fig:two_cos_graph}. The solution is shown at
$t=0$ (blue), $t= 0.1875$ (cyan), $t= 0.375$ (magenta), $t= 0.5625$ (green), $t = 0.75$ (red).}
\label{fig: two_cos_detail}
\end{figure}

\begin{figure}
\includegraphics[width=0.7\textwidth]{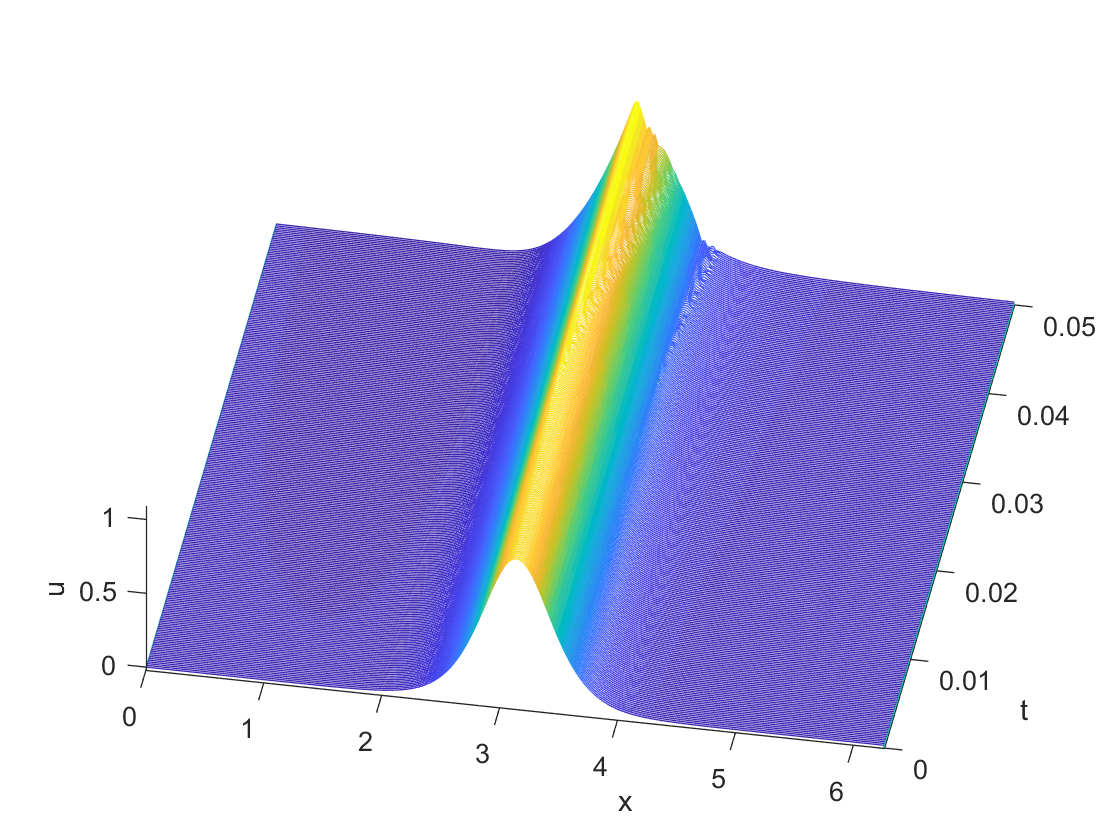}
\caption{A surface plot of the solution of \eqref{sqgivp} with initial data \eqref{sech_ic} for
$0\le t \le 0.05$. The solution is computed by a pseudo-spectral method with $2^{15}$ Fourier modes.}
\label{fig:sech}
\end{figure}

\begin{figure}
\includegraphics[width=0.7\textwidth]{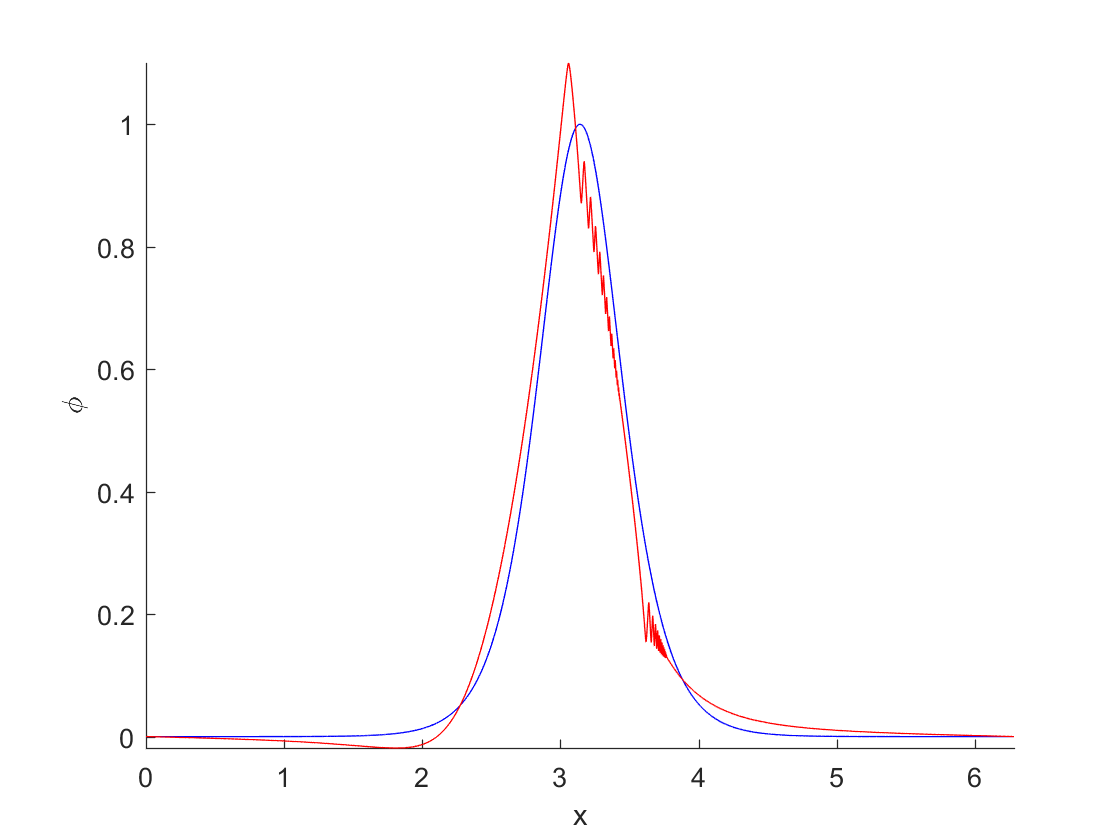}
\caption{Graphs of the solution of \eqref{sqgivp} with initial data \eqref{sech_ic} for
$t=0$ (blue) and $t=0.5$ (red). The solution is computed by a pseudo-spectral method with $2^{15}$ Fourier modes.}
\label{fig:sech_graph}
\end{figure}

\begin{figure}
\includegraphics[width=0.7\textwidth]{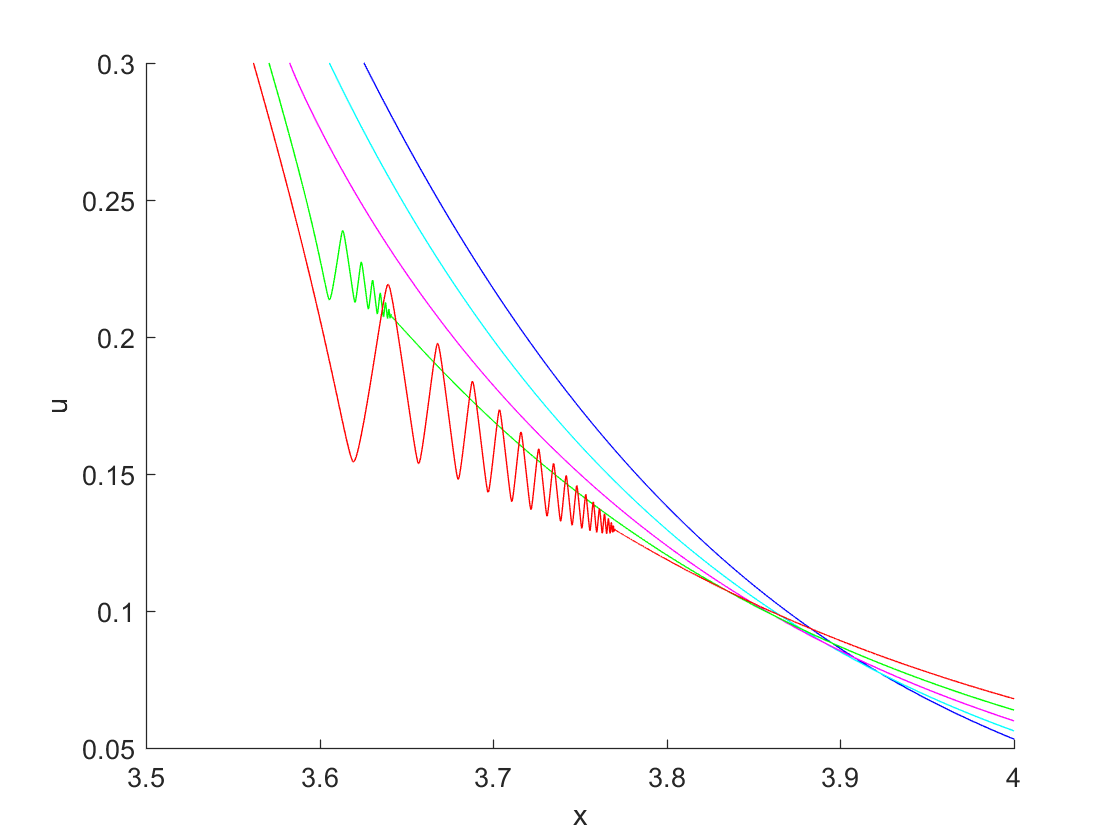}
\caption{Detail of the singularity formation near the front of the pulse
in the solution of \eqref{sqgivp} with initial data \eqref{sech_ic}
shown in Figure~\ref{fig:two_cos_graph}. The solution is shown at
$t=0$ (blue), $t= 0.125$ (cyan), $t= 0.25$ (magenta), $t= 0.375$ (green), $t = 0.5$ (red).}
\label{fig:sech_detail}
\end{figure}

\appendix
\section{Some Algebraic Inequalities}
\label{sec:ineq}

In this section, we prove the inequalities used in the local well-posedness proofs.
We use $\{k_1, k_2, k_3, k_4\}$ to denote a quadruple of real numbers such that
\[
k_1 + k_2 + k_3 + k_4 = 0,
\]
and, as in \eqref{defkm2}--\eqref{defkm1}, we denote by $(m_1, m_2, m_3, m_4)$
a permutation of $(k_1, k_2, k_3, k_4)$ such that
\[
\abs{m_1} \geq \abs{m_2} \geq \abs{m_3} \geq \abs{m_4}.
\]
If, as we assume, the $k_j$ are not identically zero, then $m_1, m_2\ne 0$, and we define
\begin{equation}
x = -\frac{m_2}{m_1},\qquad y = -\frac{m_3}{m_1}, \qquad 1 -x -y = -\frac{m_4}{m_1}.
\label{defxy}
\end{equation}
Since $m_1+m_2+m_3+m_4 = 0$, the ordering of the $|m_j|$ implies
that $0\le y \le x \le 1$ and $ |1-x-y| \le y$, so $(x,y) \in {R}$, where
the feasible region
\begin{equation}
{R} = \left\{(x, y) \in \R^2 \mid \text{$0\le y \leq x\le 1$ and $1\le x + 2y\le 2$}\right \}
\label{defregion}
\end{equation}
is shown in Figure \ref{fig:region}.
We note that $m_3 =0$ corresponds to the point $(x,y) = (1,0)$, and $m_4=0$ corresponds to the line
$x+y=1$.
The ratio $m_4/m_1$ changes sign across this line: if $x+y >1$, then $m_1$,$m_4$ have the same sign
and the opposite sign to
$m_2$, $m_3$; while if $x+y<1$, then $m_2$, $m_3$, $m_4$ have the same sign and the opposite sign to $m_1$.

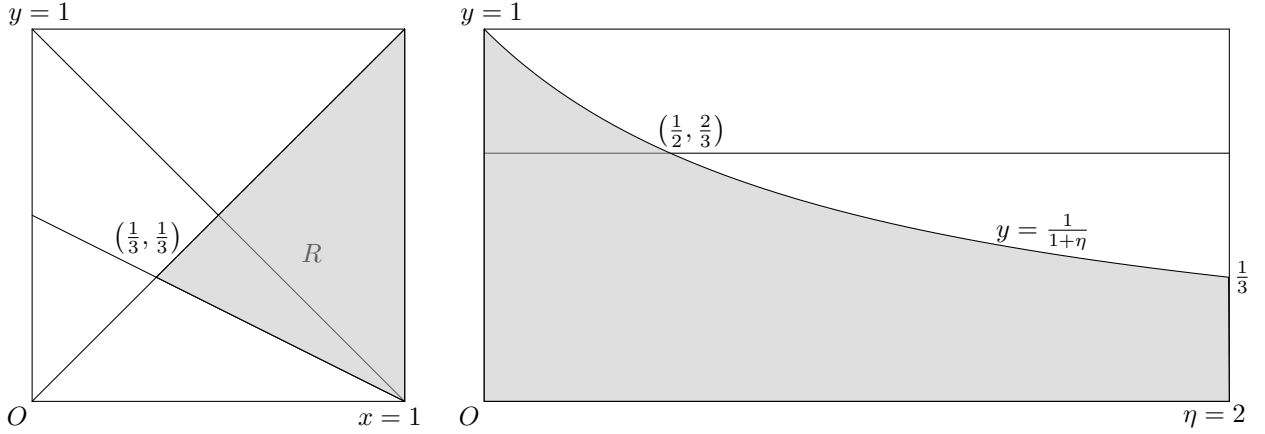
\begin{figure}[h]
\begin{tikzpicture}[line cap=round,line join=round,>=triangle 45,x=1.95in,y=1.95in]
\clip(-.07,-.07) rectangle (1.05,1.15);
\draw[color=black] (0,0) -- (1,0);
\draw[color=black] (0,1) -- (1,1);
\draw[color=black] (0,0) -- (0,1);
\draw[color=black] (1,1) -- (1,0);
\draw[color=black] (1,1) -- (0,0);
\draw[color=black] (0,.5) -- (1,0);
\draw[color=black] (0,1) -- (1,0);
\draw[color=black] (3/4,2/5) node {$R$};
\draw[color=black] (.96, -.04) node {$x = 1$};
\draw[color=black] (0.02, 1.04) node {$y = 1$};
\draw[color=black] (-.04, -.04) node {$O$};
\draw[color=black] (.31, .44) node {$\left(\frac{1}{3}, \frac{1}{3}\right)$};
\draw[fill=gray!50!white,fill opacity=0.5](1/3,1/3)--(1,0)--(1,1)--(1/3,1/3);
\end{tikzpicture}
\quad
\begin{tikzpicture}[line cap=round,line join=round,>=triangle 45,x=1.95in,y=1.95in]
\clip(-.07,-.07) rectangle (2.05,1.15);
\draw[color=black] (0,0) -- (2,0);
\draw[color=black] (0,1) -- (2,1);
\draw[color=black] (0,0) -- (0,1);
\draw[color=black] (2,1) -- (2,0);
\draw[color=black] (0,2/3) -- (2,2/3);
\draw[fill=gray!50!white,fill opacity=0.5] plot[smooth, samples=200, domain=0:2] ({\x},{1/(1+\x)}) -- (2,0) -- (0,0) -- cycle;
\draw[color=black] (2+.035,1/3) node {$\frac 13$};
\draw[color=black] (1/2+.055, 2/3+.06) node {$\left(\frac 12, \frac 23\right)$};
\draw[color=black] (1.96, -.04) node {$\eta = 2$};
\draw[color=black] (1.5, 0.455) node {$y = \frac{1}{1 + \eta}$};
\draw[color=black] (0.02, 1.04) node {$y = 1$};
\draw[color=black] (-.04, -.04) node {$O$};
\end{tikzpicture}
\centering
\caption{Left: The feasible region $R$ for $(x,y)$-variables in \eqref{defregion}. Right: The feasible region for $(\eta,y)$-variables in
\eqref{polarco}.}
\label{fig:region}
\end{figure}

We begin with the following inequality for a symmetric function of fractional powers.

\begin{lemma}
\label{2s+1ineq}
If $k_j, m_j \in \R$ with $j = 1, 2, 3, 4$ are defined as above, then for every $s > 0$
there exists a constant $C_0(s)$, depending only on $s$, such that
\[
\abs{k_1 \abs{k_1}^{2s} + k_2 \abs{k_2}^{2s} + k_3 \abs{k_3}^{2s} + k_4 \abs{k_4}^{2s}}
\leq C_0(s) \abs{m_1}^s \abs{m_2}^s \abs{m_3}.
\]
\end{lemma}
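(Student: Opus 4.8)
The plan is to exploit the constraint $m_1+m_2+m_3+m_4=0$ through the odd function $\psi(t)=t\abs{t}^{2s}$, which is $C^1$ on $\R$ with $\psi'(t)=(2s+1)\abs{t}^{2s}$ since $s>0$. Because $(m_1,m_2,m_3,m_4)$ is a permutation of $(k_1,k_2,k_3,k_4)$, the quantity to be bounded is exactly $\abs{\psi(m_1)+\psi(m_2)+\psi(m_3)+\psi(m_4)}$. I would estimate the ``large-mode'' pair $\psi(m_1)+\psi(m_2)$ and the ``small-mode'' pair $\psi(m_3)+\psi(m_4)$ separately. The substance of the argument is that the large-mode pair, which a priori looks only $O(\abs{m_1}^{2s}\abs{m_3})$, can be improved, because among four numbers summing to zero the largest in absolute value cannot be much bigger than the second largest.

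For the large-mode pair, set $w=m_3+m_4$, so that $m_1+m_2=-w$ and $m_1+w=-m_2$. Using oddness, $\psi(m_1)+\psi(m_2)=\psi(m_1)-\psi(m_1+w)$, and the mean value theorem gives $\psi(m_1)-\psi(m_1+w)=-\psi'(\xi)\,w$ for some $\xi$ between $m_1$ and $m_1+w=-m_2$; since $\abs{m_2}\le\abs{m_1}$ this forces $\abs{\xi}\le\abs{m_1}$, hence $\abs{\psi'(\xi)}\le(2s+1)\abs{m_1}^{2s}$. Combining with $\abs{w}\le\abs{m_3}+\abs{m_4}\le2\abs{m_3}$ yields $\abs{\psi(m_1)+\psi(m_2)}\le 2(2s+1)\abs{m_1}^{2s}\abs{m_3}$. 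The key upgrade now comes from $\abs{m_1}=\abs{m_2+m_3+m_4}\le3\abs{m_2}$, which lets me trade one factor $\abs{m_1}^s$ for $3^s\abs{m_2}^s$, giving $\abs{\psi(m_1)+\psi(m_2)}\le 2\cdot3^s(2s+1)\,\abs{m_1}^s\abs{m_2}^s\abs{m_3}$.

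For the small-mode pair a crude bound suffices: $\abs{\psi(m_3)+\psi(m_4)}\le\abs{m_3}^{2s+1}+\abs{m_4}^{2s+1}\le2\abs{m_3}^{2s+1}$, and since $\abs{m_3}\le\abs{m_2}\le\abs{m_1}$ we have $\abs{m_3}^{2s}\le\abs{m_1}^s\abs{m_2}^s$, so $\abs{\psi(m_3)+\psi(m_4)}\le2\abs{m_1}^s\abs{m_2}^s\abs{m_3}$. Adding the two estimates proves the lemma with $C_0(s)=2+2\cdot3^s(2s+1)$. The only real obstacle is the one already highlighted: a naive grouping leaves an unusable $\abs{m_1}^{2s}\abs{m_3}$ on the top modes, and one must notice and use the constraint $\abs{m_1}\le3\abs{m_2}$ (equivalently, that $m_1$ and $m_2$ are nearly opposite in sign whenever they dominate $m_3,m_4$). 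As an alternative I could normalize $m_1=1$, pass to the variables $(x,y)=(-m_2/m_1,-m_3/m_1)\in R$ of \eqref{defxy}, and check that the continuous function $\bigl|1-x^{2s+1}-y^{2s+1}-(1-x-y)\abs{1-x-y}^{2s}\bigr|/(x^s y)$ stays bounded on the compact region $R$, the only delicate point being the corner $(1,0)$, where a one-term Taylor expansion of $(1-a)^{2s+1}$ together with $\abs{1-x-y}\le y$ settles it; but the direct argument above is cleaner and yields an explicit constant.
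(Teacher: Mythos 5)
Your proof is correct, but it takes a genuinely different route from the paper's. The paper normalizes by $m_1$, passes to the variables $(x,y)=(-m_2/m_1,-m_3/m_1)$ ranging over the compact feasible region $R$, and reduces the lemma to the boundedness of the continuous function $f(x,y)=\bigl(1-x^{2s+1}-y^{2s+1}+(x+y-1)\abs{x+y-1}^{2s}\bigr)/(x^sy)$ on $R\setminus\{(1,0)\}$; the only delicate point is the corner $(1,0)$, which it handles by the substitution $x=1-\eta y$ and a Taylor expansion giving $f=(2s+1)\eta+\O(y+y^{2s})$ uniformly in $0\le\eta\le2$ --- exactly the argument you sketch as your ``alternative.'' Your primary argument instead works directly with $\psi(t)=t\abs{t}^{2s}$: the mean value theorem applied to $\psi(m_1)-\psi(m_1+w)$ with $w=m_3+m_4$ is the quantitative counterpart of the paper's corner expansion (note $(2s+1)$ appears as $\psi'$ in both), and the trade $\abs{m_1}\le3\abs{m_2}$ is what lets you convert $\abs{m_1}^{2s}$ into $\abs{m_1}^s\abs{m_2}^s$. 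The checkable details all hold: $\psi\in C^1(\R)$ for $s>0$, the intermediate point $\xi$ lies between $m_1$ and $-m_2$ so $\abs{\xi}\le\abs{m_1}$, and the edge cases with some $m_j=0$ cause no trouble. What your version buys is a fully elementary, self-contained proof with the explicit constant $C_0(s)=2+2\cdot3^s(2s+1)$ valid for all $s>0$; what the paper's version buys is the sharp local behavior near the worst configuration (two nearly opposite large modes), which it then supplements with a numerical computation to claim the sharper constant $3^{s+1}-3^{1-s}$ for $s\ge s_0\approx0.6365$, a value used later in the quantitative SQG estimate. Either proof suffices for the applications in Theorems~\ref{lwp-gsqg1a2} and~\ref{sqglwpthm}.
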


\begin{proof}
Both sides of the inequality are zero if $m_3=0$,  when $m_1=-m_2$ and $m_4=0$, so
we may assume that $m_3 \ne 0$.
Using \eqref{defxy}, and the fact that the $k_j$ are a permutation of the $m_j$, we get that
\[
\frac{k_1 \abs{k_1}^{2s} + k_2 \abs{k_2}^{2s} + k_3 \abs{k_3}^{2s} + k_4 \abs{k_4}^{2s}}
{ \abs{m_1}^s \abs{m_2}^s m_3}
= f(x,y),
\]
where the continuous function $f \colon {R}\setminus\{(1,0)\}\to \R$ is given by
\[
f(x, y) = \frac{1 - x^{2s+1} - y^{2s+1} + (x + y - 1) \abs{x + y - 1}^{2s}}{x^s y}.
\]
The only place where $f$ could fail to be bounded is near $(1, 0)$.
Writing
\begin{equation}
x = 1 - \eta y, \qquad \text{with $0 \leq \eta \leq 2$},
\label{polarco}
\end{equation}
and Taylor expanding $f$ as $y \to 0^+$, we get that
\[\begin{aligned}
f(1 - \eta y, y)
& = (2s + 1) \eta + \O(y + y^{2s})\end{aligned}\]
uniformly in $0\le \eta\le 2$. It follows that
\[
\lim_{y \to 0^+}\sup_{0\le\eta\le 2} \abs{f(1 - \eta y,y)} = 2\cdot (2s + 1),
\]
which proves the Lemma.
\end{proof}

Numerical computations show that the supremum of $|f|$ on ${R}\setminus\{(1,0)\}$
is attained at $(x,y) = (1/3,1/3)$ if $s\ge s_0$, where $s_0 \approx 0.6365$ is the positive value of $s$ at which  $3^{s+1} - 3^{1-s}= 2(2s+1)$. In that case, we may take
\begin{equation}
C_0(s) = 3^{s+1} - 3^{1-s}.
\label{numC0}
\end{equation}

Next, we estimate the gSQG and SQG kernels defined in \eqref{Skernel1a2} and \eqref{Skernelsqg}. From \eqref{defS}, these kernels have the form
\begin{align}
S(k_1, k_2, k_3, k_4)
&= |m_1|^{3-\alpha}\left[g\left(-\frac{m_2}{m_1},-\frac{m_3}{m_1}\right) + h\left(-\frac{m_2}{m_1},-\frac{m_3}{m_1}\right)\right]
\label{defSgh}
\end{align}
where $\alpha = 1$ in the case of the SQG kernel, and
\begin{align}
\begin{split}
g(x,y) &=  a(y) + a(x+y-1) - a(1-x),
\\
h(x,y) &= a(1) + a(x)- a(1 - y) - a(x+y).
\end{split}
\label{defgh}
\end{align}
First, we estimate $h$.

\begin{lemma}
\label{lemhest}
Let $a$ be given by \eqref{defsqga} or \eqref{sqgalphaa} with $\alpha \le 2$, and
let $h$ be given by \eqref{defgh}.
There exists $C > 0$ such that
\[
|h(x,y)| \le C |x+y-1| y\qquad \text{for all $(x,y)\in R$}.
\]
\end{lemma}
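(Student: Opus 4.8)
The plan is to reduce the bound on $h$ to a statement about the smoothness of the function $a$ near the relevant points, and then exploit that $h$ vanishes to the required order along the two curves $\{x+y=1\}$ and $\{y=0\}$. Recall that $h(x,y) = a(1) + a(x) - a(1-y) - a(x+y)$, where $a(k) = c_\alpha|k|^{3-\alpha}$ with $3-\alpha\ge 1$ (in the gSQG case $1<\alpha<2$) or $a(k) = -k^2\log|k|$ (in the SQG case $\alpha=1$); in either case $a$ is $C^1$ on $\R$, is $C^2$ away from the origin, and, crucially, satisfies $a(0)=0$, $a'(0)=0$, with $a''$ locally integrable near $0$ and bounded away from $0$. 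First I would observe that $h$ can be written as a ``mixed second difference'': set $\Phi(u,v) = a(u) - a(u+v)$, so that $h(x,y) = \Phi(1,-y) - \Phi(x,-y) = -\bigl[\Phi(x,-y)-\Phi(1,-y)\bigr]$, which is a first difference in the first slot of a first difference in the second slot. Thus $h$ is naturally of size $|x-1|\cdot|{-y}| = |x-1|\,y$ when $a$ is $C^2$, and we must show this persists uniformly on all of $R$ despite the possible singularity of $a''$ at the origin and despite that we want the factor $|x+y-1|$ rather than $|x-1|$.

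The key steps, in order, are as follows. (i) Rewrite $h(x,y) = -\int_x^1\!\!\int_{-y}^{0} a''(u+v)\,dv\,du$ whenever the segment of integration avoids $0$; this gives $|h(x,y)| \le |x-1|\,y\,\sup|a''|$ on the region bounded away from the origin, which is the easy regime. (ii) Handle the regime near the origin: on $R$ one has $0\le y\le x\le 1$, so the only way to approach the origin is $x\to 0$ (forcing $y\to 0$ as well since $y\le x$), but then one is also near the corner $(0,0)$ of $R$, and there $x+y-1 \to -1$, so the claimed bound $C|x+y-1|\,y$ degenerates to $\sim C y$, which is weak and easy: since $a$ is $C^1$ with $a(0)=a'(0)=0$ we have $|a(x)|\le C x^2$ and $|a(x+y)-a(1-y)| \le C$, hmm — more carefully, I would instead just bound $|h| \le |a(x)-a(x+y)| + |a(1)-a(1-y)| \le C y$ (using $|a'|$ bounded on bounded sets, after noting $x,x+y,1,1-y$ all lie in $[0,2]$ where $a'$ is bounded), and this already suffices whenever $|x+y-1|$ is bounded below, say $|x+y-1|\ge 1/2$. (iii) So the genuine work is the strip $\{(x,y)\in R : |x+y-1| < 1/2\}$, where $x$ is bounded away from $0$ (indeed $x \ge 1 - 1/2 - y \ge$ something positive once $y$ is small, and if $y$ is not small then $|x+y-1|y$ and $y$ are comparable), hence $u+v$ stays away from $0$ along the integration path and step (i) applies directly: $|h(x,y)|\le |x-1|\,y\,\sup_{[1/4,9/4]}|a''|$. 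Finally I must upgrade $|x-1|$ to $|x+y-1|$: on the strip, $|x-1| \le |x+y-1| + y$, so $|h| \le C(|x+y-1|+y)\,y$, and it remains to absorb the leftover $Cy^2$; for that I note that on the strip either $y \le 2|x+y-1|$ (then $y^2 \le 2|x+y-1|y$ and we are done) or $y > 2|x+y-1|$, in which case $|x-1| \le |x+y-1| + y < \tfrac32 y$ and more importantly one re-examines $h$ using the \emph{other} difference structure, $h(x,y) = [a(x)-a(x+y)] - [a(1-y)-a(1)] = \int_0^y[a'(1-t)-a'(x+t)]\,dt$, and since $|1-t - (x+t)| = |1-x-2t|\le |1-x| + 2y \le C y$, the integrand is $O(\sup|a''|\cdot y)$ and $|h| \le C y^2 \le C|x+y-1|\,y$ — wait, that last inequality fails when $|x+y-1|$ is tiny, so instead in this sub-case I use that $y>2|x+y-1|$ together with $|x-1|<\tfrac32 y$ cannot both hold unless... let me just say: the cleanest route is to prove directly $|h(x,y)|\le C\,|x+y-1|\,y$ by a single change of variables, writing the \emph{difference of differences} symmetrically as $h = -[\Phi(x,-y)-\Phi(1,-y)]$ and noting $\Phi(\cdot,-y)$ is itself differentiable with $\partial_u\Phi(u,-y) = a'(u)-a'(u-y) = \int_{-y}^0 a''(u+v)\,dv$, so that $h = -\int_1^x\!\int_{-y}^0 a''(u+v)\,dv\,du$, and the inner length is $y$ while the outer length is $|x-1|$; to convert $|x-1|$ to $|x+y-1|$ use that on $R$ we have $|x-1|\le (1+\tfrac{1}{ }) |x+y-1|$... ).

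The main obstacle I anticipate, then, is \emph{not} the smoothness of $a$ — once $a\in C^2$ away from $0$ with locally integrable $a''$, the mixed-second-difference representation does all the work — but rather the bookkeeping needed to replace the ``obvious'' factor $|x-1|$ by the sharper $|x+y-1|$ uniformly over $R$, including on the sliver of $R$ near the corner $(0,0)$ and near the line $x+y=1$ where $m_4$ changes sign. I expect the proof to split $R$ into (a) the region where $|x+y-1|\ge \delta$ for a fixed small $\delta$, where the crude bound $|h|\le Cy$ combined with $y\le \delta^{-1}|x+y-1|y$ suffices, and (b) the region $|x+y-1|<\delta$, where $x$ is bounded away from $0$, the integration path for $a''$ avoids the origin, and the clean estimate $|h|\le |x-1|\,y\,\|a''\|_{L^\infty(\text{compact})}$ holds; on (b) one finishes by the elementary geometric fact that $|x-1| \le C|x+y-1|$ throughout $\{(x,y)\in R: |x+y-1|<\delta\}$ — which I would verify by noting that on $R$, $x+2y\ge 1$ forces $y\ge (1-x)/2$, hence $1-x\le 2y$ and $x+y-1 \ge 1-x - y \cdot 0$... the inequality $|x-1|\le C|x+y-1|$ on this strip is exactly where I'd be most careful, and if it turns out to be false on some thin piece (e.g. where $y$ is comparable to $1-x$ and $x+y-1$ is small), I would fall back to the second representation $h=\int_0^y[a'(1-t)-a'(x+t)]\,dt$ which gives $|h|\le Cy\cdot\sup_t|1-x-2t|\le Cy(|1-x|+2y)$, and combine the two bounds, $|h|\le C\min\{|x-1|y,\ y(|1-x|+y)\}$, with the constraint $x+2y\ge 1$ to conclude. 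I would keep the exposition short by presenting only the two representations of $h$ and the region decomposition, leaving the elementary inequalities among $x,y,1-x,x+y-1$ on $R$ to a one-line verification.
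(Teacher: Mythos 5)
Your overall architecture --- represent $h$ as a mixed second difference of $a$, then split $R$ into a region where $|x+y-1|$ is bounded below (where the crude bound $|h|\le 2\|a'\|_{L^\infty[0,2]}\,y$ suffices) and a complementary strip where the arguments stay away from the singularity of $a''$ --- is sound and close in spirit to the paper's proof (which splits on $y\le 2/3$ versus $y\ge 2/3$ and uses the local integrability of $a''$ on the bad piece; your use of $\|a'\|_\infty$ there is a legitimate, slightly simpler alternative). The gap is in the central identity: with $h=a(1)+a(x)-a(1-y)-a(x+y)$, the representation you write, $h=-\int_x^1\int_{-y}^0 a''(u+v)\,\mathrm{d}v\,\mathrm{d}u$, is false --- the right-hand side equals $-\bigl[a(1)-a(x)-a(1-y)+a(x-y)\bigr]$, which involves $a(x-y)$ rather than $a(x+y)$ --- and, more importantly, any rectangle whose outer side has length $|x-1|$ cannot yield the lemma. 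The conversion $|x-1|\le C|x+y-1|$ that your step (b) relies on fails on $R$: on the segment $\{x+y=1,\ \tfrac12\le x<1\}\subset R$ the right-hand side vanishes while $|x-1|>0$. Your fallback $|h|\le Cy(|1-x|+y)\le Cy^2$ fails on the same segment (at $x=y=\tfrac12$ the target bound $C|x+y-1|y$ is zero but $Cy^2$ is not), and so does the minimum of the two bounds; the cancellation in $\int_0^y[a'(1-t)-a'(x+t)]\,\mathrm{d}t$ coming from the sign change of $1-x-2t$ is exactly what these absolute-value estimates throw away.

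The cure is to choose the rectangle so that its side lengths are $y$ and $|x+y-1|$ from the start: since $h=[a(1)-a(1-y)]-[a(x+y)-a(x)]$ is the second difference of $a$ over $[1-y,1]\times[0,x+y-1]$, one has
\[
h(x,y)=-\int_{1-y}^{1}\int_{0}^{x+y-1} a''(t+s)\,\mathrm{d}s\,\mathrm{d}t .
\]
On your strip $\{|x+y-1|<\delta\}$ the arguments satisfy $t+s\ge\min(1-y,x)\ge\tfrac13-\delta$ (because $x\ge\tfrac13$ on $R$, and $x+y<1+\delta$ then forces $y<\tfrac23+\delta$), so $a''$ is bounded there and $|h|\le\|a''\|_{L^\infty[1/3-\delta,\,2]}\,|x+y-1|\,y$ directly, with no conversion of $|x-1|$ needed. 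This is precisely the paper's representation, written in the variables $x=1-\eta y$ where $(1-\eta)y=x+y-1$. With that correction your argument closes; without it, the estimate on the strip near $x+y=1$ does not.
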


\begin{proof}
Using coordinates \eqref{polarco}, we have
\begin{align}
\begin{split}
h(1-\eta y,y)  &= a(1) + a(1 - \eta y)   - a(1 - y) - a(1+(1-\eta)y)
\\
&= - \int_{1-y}^1 \int_0^{(1-\eta)y} a''(t+s) \diff{s}\diff{t}.
\end{split}
\label{int_ts}
\end{align}
If  $0\le y \le {2}/{3}$ and $(x,y)\in R$, then ${1}/{3} \le t+s \le {5}/{3}$ in \eqref{int_ts}.
Since $|a''(x)|\le M$ is bounded on this interval, we get that
\begin{align*}
|h(1-\eta y,y)|  &\le M|1-\eta| y^2.
\end{align*}
If ${2}/{3}\le y \le 1$ and $(x,y)\in R$, then
$0\le \eta \le {1}/{2}$, and it follows that
\begin{align*}
|h(1-\eta y,y)| &\le \int_0^1 \int_0^{1} |a''(t+s)| \diff{s}\diff{t}
\\
&\le  \frac{9}{2}\left(\int_0^1 \int_0^{1} |a''(t+s)| \diff{s}\diff{t}\right) |1-\eta| y^{2}.
\end{align*}
Since the integral converges and $(1-\eta)y = x+y-1$, we obtain the result.
\end{proof}

\begin{lemma}
\label{kernelest1a2}
Let $S$ be given by \eqref{Skernel1a2} with $1 < \alpha \le 2$.
If $k_j, m_j \in \R$ with $j = 1, 2, 3, 4$ are defined as above, then
 there exists a constant $C_1(\alpha)$, depending only on $\alpha$, such that
\[
|S(k_1, k_2, k_3, k_4)|
\leq C_1(\alpha) \abs{m_3}^{2-\alpha} \abs{m_4}.
\]
\end{lemma}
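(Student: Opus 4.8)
The plan is to reduce the estimate to the decomposition $S = |m_1|^{3-\alpha}\bigl[g(x,y) + h(x,y)\bigr]$ given in \eqref{defSgh}--\eqref{defgh}, where $x = -m_2/m_1$, $y = -m_3/m_1$, so that only $g$ needs a new bound; $h$ is already controlled by Lemma~\ref{lemhest}. If $m_1 = 0$ then all the $m_j$ vanish, $S = 0$, and there is nothing to prove; otherwise $(x,y)$ lies in the region $R$ of \eqref{defregion}, $|1-x-y| = |m_4|/|m_1|$, $y = |m_3|/|m_1|$, and since $a(k) = c_\alpha|k|^{3-\alpha}$ is even the bracket in \eqref{defSgh} is exactly $g(x,y)+h(x,y)$. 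Because $|m_3|^{2-\alpha}|m_4| = |m_1|^{3-\alpha}\,|1-x-y|\,y^{2-\alpha}$, the lemma follows once one shows
\[
|g(x,y)| + |h(x,y)| \le C(\alpha)\,|1-x-y|\,y^{2-\alpha}\qquad\text{for all }(x,y)\in R .
\]

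For the $h$-term, Lemma~\ref{lemhest} gives $|h(x,y)|\le C\,|1-x-y|\,y$, and since $0\le y\le 1$ and $2-\alpha\in[0,1)$ we have $y\le y^{2-\alpha}$, so $h$ already has the required form. The remaining work is the bound on $g$, which parallels the proof of Lemma~\ref{lemhest} but uses only one cancellation instead of two. Here I would write $1-x = y-w$ with $w = x+y-1$, note $|w|\le y$ on $R$, and use evenness of $a$ to write $g(x,y) = a(y) + a(w) - a(y-w)$, which vanishes at $w=0$; the fundamental theorem of calculus in $w$ then gives
\[
g(x,y) = \int_0^w \bigl[a'(s) + a'(y-s)\bigr]\diff{s}.
\]
Since $a'(k) = c_\alpha(3-\alpha)\,\sgn(k)\,|k|^{2-\alpha}$ is $(2-\alpha)$-homogeneous in modulus, the bounds $|s|\le|w|\le y$ and $|y-s|\le 2y$ (valid for $s$ between $0$ and $w$ because $|w|\le y$) yield $|a'(s)| + |a'(y-s)| \le |c_\alpha|(3-\alpha)\bigl(1 + 2^{2-\alpha}\bigr)\,y^{2-\alpha}$, hence $|g(x,y)|\le C(\alpha)\,|w|\,y^{2-\alpha} = C(\alpha)\,|1-x-y|\,y^{2-\alpha}$; the degenerate case $y=0$ (which forces $(x,y)=(1,0)$) is trivial.

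Combining the two estimates and translating back through $|1-x-y| = |m_4|/|m_1|$ and $y = |m_3|/|m_1|$ gives $|S|\le C_1(\alpha)\,|m_3|^{2-\alpha}\,|m_4|$ with $C_1(\alpha)$ depending only on $\alpha$. The only delicate point is the appearance of $y^{2-\alpha}$ rather than $y$ in the bound for $g$: this power is produced by the $(2-\alpha)$-homogeneity of $a'$ at the origin, not by $g$ vanishing on $\{y=0\}$, and it cannot be improved (for $\alpha>1$, $a'$ is singular, not linear, at $0$); this is exactly why the final estimate carries $|m_3|^{2-\alpha}$ instead of $|m_3|$. Everything else is routine bookkeeping of the region $R$ and the substitution \eqref{defxy}.
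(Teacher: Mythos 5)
Your proof is correct and follows essentially the same route as the paper: the same reduction to $S=|m_1|^{3-\alpha}\left[g(x,y)+h(x,y)\right]$ via \eqref{defSgh}--\eqref{defgh}, the same appeal to Lemma~\ref{lemhest} for $h$ (absorbing the extra factor via $y\le y^{2-\alpha}$ on $[0,1]$), and the same single cancellation of $g$ at $m_4=0$. The only cosmetic difference is that you bound $g$ by the fundamental theorem of calculus in $w=x+y-1$ combined with the $(2-\alpha)$-homogeneity of $a'$, whereas the paper first factors out $y^{3-\alpha}$ using the coordinates \eqref{polarco} and then bounds the resulting one-variable function of $\eta$ near its zero at $\eta=1$; the two arguments are equivalent.
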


\begin{proof}
It follows from \eqref{Skernel1a2} that $S(k_1,k_2,k_3,k_4) = 0$ whenever any of the $k_j$ vanishes,
so we may assume that $m_4\ne 0$. The kernel $S$ is given by \eqref{defSgh}--\eqref{defgh} with $a(x) = |x|^{3-\alpha}$, where we neglect the unimportant constant factor $c_\alpha$ in \eqref{Skernel1a2}. We have
\begin{align*}
|g(1-\eta y,y)| &=\left[1 - \eta^{3-\alpha} + |1-\eta|^{3-\alpha}\right]y^{3-\alpha}
\le C |1-\eta|y^{3-\alpha},
\end{align*}
where $C$ denotes a generic constant depending on $\alpha$.
Using this inequality, \eqref{polarco},  and Lemma~\ref{lemhest}, we get
\begin{align*}
|g(x,y) + h(x,y)| \le C |x+y-1| y^{2-\alpha}\left[1 + y^{\alpha-1}\right]
\le 2C |x+y-1| y^{2-\alpha},
\end{align*}
and the use of this inequality in \eqref{defSgh} proves the Lemma.
\end{proof}

This estimate in Lemma~\ref{kernelest1a2} fails for $0<\alpha < 1$ because of the term involving $h$. Instead, we get
\[
\left|S(k_1, k_2, k_3, k_4)\right| \le C|m_1|^{1-\alpha} |m_3| |m_4|,
\]
and $S$ is not bounded the smaller wavenumbers.

Finally, we estimate the SQG kernel, where an additional logarithmic growth factor appears.

\begin{lemma}
\label{kernelestsqg}
Let $S$ be given by \eqref{Skernelsqg}. If $k_j, m_j \in \R\setminus\{0\}$ with $j = 1, 2, 3, 4$ are defined as above,
then there exists a numerical constant $C_2$ such that
\[
|S(k_1, k_2, k_3, k_4)| \leq C_2 \abs{m_3} \abs{m_4} \log\left(1 + \abs{\frac{m_2}{m_3}}\right).
\]
\end{lemma}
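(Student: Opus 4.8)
The plan is to reduce the estimate to an elementary pointwise inequality on the feasible region $R$ and to exploit the explicit form $a(k) = -k^2\log|k|$ of the SQG symbol in \eqref{defsqga}. By the decomposition \eqref{defSgh}--\eqref{defgh} with $\alpha = 1$, we have
\[
S(k_1, k_2, k_3, k_4) = m_1^2\left[g(x, y) + h(x, y)\right], \qquad x = -\frac{m_2}{m_1}, \quad y = -\frac{m_3}{m_1},
\]
where $(x, y)\in R$ (so in particular $0 < y \le x \le 1$, using $m_j\ne 0$). Writing $w = 1 - x - y = -m_4/m_1$, we have $|w|\le y$, $|m_3| = y|m_1|$, $|m_4| = |w|\,|m_1|$, and $|m_2/m_3| = x/y$, so the asserted bound is equivalent to
\[
\left|g(x, y) + h(x, y)\right| \le C_2\, y\,|w|\,\log\!\left(1 + \frac{x}{y}\right)\qquad\text{for all }(x, y)\in R,\ y>0.
\]
Since $x\ge y$ on $R$, we have $1 + x/y = (x + y)/y\ge 2$, so $\log(1 + x/y)\ge\log 2 > 0$; this lets me absorb bounded multiplicative constants into the right-hand side freely.

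First I would dispose of $h$ using Lemma~\ref{lemhest} (applicable for the SQG symbol \eqref{defsqga} since $\alpha = 1\le 2$): it gives $|h(x, y)|\le C\,|x + y - 1|\,y = C\,|w|\,y\le (C/\log 2)\,y|w|\log(1 + x/y)$. For $g$, I would use $g(x, y) = a(y) + a(w) - a(y + w)$ (from \eqref{defgh} and the evenness of $a$) together with the scaling identity, valid for $|w|\le y$ and $y > 0$,
\[
g(x, y) = (y + w)^2\log|y + w| - y^2\log y - w^2\log|w| = 2wy\log y + y^2\,\psi\!\left(\frac{w}{y}\right),
\]
where $\psi(t) = (1 + t)^2\log|1 + t| - t^2\log|t|$ on $[-1, 1]$. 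The virtue of this display is that it isolates the dangerous logarithm into the single term $2wy\log y$: since $\psi$ is continuous on $[-1, 1]$, vanishes at $t = 0$, and is differentiable there, $\psi(t)/t$ is bounded on $[-1, 1]\setminus\{0\}$, say by a constant $C_\psi$, so that $y^2|\psi(w/y)|\le C_\psi\,y|w|\le (C_\psi/\log 2)\,y|w|\log(1 + x/y)$.

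It then remains only to estimate $2wy\log y$, which is the heart of the matter and reduces to the claim
\[
-\log y\le 2\log\!\left(1 + \frac{x}{y}\right)\qquad\text{on }R,
\]
equivalently $y\le (x + y)^2$. I would prove this from the geometry of $R$ alone: the constraint $x\ge y$ gives $x + y\ge 2y$, and $|w|\le y$ gives $x + y = 1 - w\ge 1 - y$, so $x + y\ge\max\{2y,\,1 - y\}\ge\sqrt{y}$ for $y\in(0, 1)$ (indeed $2y\ge\sqrt y$ when $y\ge 1/4$, and $1 - y\ge 3/4 > \sqrt y$ when $y\le 1/4$); hence $(x + y)^2\ge y$. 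Thus $|2wy\log y|\le 4\,y|w|\log(1 + x/y)$, and adding the three estimates proves the lemma with, for example, $C_2 = 4 + (C + C_\psi)/\log 2$.

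The only genuinely delicate step is the last claim. A priori $-\log y$ is unbounded as $y\to 0^+$, and the content is that the two defining constraints of $R$ pin $x + y$ away from $0$ (in fact $x + y\ge\sqrt y$), so that $\log(1 + x/y)$ grows at a commensurate rate; making this precise is the elementary observation above. Everything else — the decomposition \eqref{defSgh}, the scaling identity for $g$, the bound $|\psi(t)|\le C_\psi|t|$, and the estimate for $h$ quoted from Lemma~\ref{lemhest} — is routine.
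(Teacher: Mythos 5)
Your proof is correct and follows essentially the same route as the paper's: the same decomposition \eqref{defSgh}--\eqref{defgh}, the same appeal to Lemma~\ref{lemhest} for $h$, and the same splitting of $g$ into the main term $2wy\log y$ plus a remainder that is Lipschitz in $w/y$ (your variable $t=w/y$ is just the paper's $\eta-1$). The only cosmetic difference is the final comparison of $\log(1/y)$ with $\log(1+x/y)$, which the paper obtains from $x\ge 1/3$ while you use the slightly sharper observation $x+y\ge\sqrt{y}$.
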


\begin{proof}
The kernel $S$ is given by \eqref{defSgh}--\eqref{defgh} with $a(x) = -x^2\log|x|$. We have
\begin{align*}
\left|g(1-\eta y,y)\right| &=  \left|y^2 \log y + (1-\eta)^2y^2 \log|(1-\eta)y| - \eta^2 y^2 \log \eta y\right|
\\
&= \left|2(1-\eta) y^2\log y  +\left[(1-\eta)^2 \log|1-\eta| -\eta^2 \log\eta\right] y^2\right|
\\
& \le  C |1-\eta|y^2\left[1+  \log(1/y)\right].
\end{align*}
Since $x\ge 1/3$ and $x/y\ge 1$, it follows that
\[
|g(x,y)| \le C|x+y-1| y \log\left(1 + \frac{x}{y}\right).
\]
Using this inequality  and Lemma~\ref{lemhest}, we get that
\[
|g(x,y) + h(x,y)| \le C |x+y-1| y \log\left(1 + \frac{x}{y}\right),
\]
and the use of this inequality in \eqref{defSgh} proves the Lemma.
\end{proof}

Numerical computations show that in Lemma~\ref{kernelestsqg} we can take, for example,
\begin{equation}
C_2=5.
\label{numC2}
\end{equation}
The worst case for the growth of $S$ is when two wavenumbers are in the same ``shell''
with much larger and almost equal absolute values than the other two wavenumbers, which
happens near the point $(x,y)=(1,0)$ in $R$. For example, suppose that
\[
k_1 = k+a,\qquad k_2 = -(k+b),\qquad k_3 = -a,\qquad k_4 = b,
\]
and consider the limit $k\to \infty$ with $a, b >0$ fixed. Then one finds that
\begin{align*}
S(k_1,k_2,k_3,k_4)  &= -2ab \log|k| +\O(1)
=2 m_3 m_4\log \left|\frac{m_2}{m_3}\right| + \O(1).
\end{align*}
Thus, the logarithmic factor in Lemma~\ref{kernelestsqg} cannot be improved upon.

We end this section with a Corollary of Lemma \ref{kernelestsqg} for the SQG kernel as a function
of integer wavenumbers. This Lemma uses the fact that the $|k_j|$ are bounded away from zero,
so it does not apply in the spatial case with $k_j\in \R\setminus\{0\}$.

\begin{corollary}
\label{kernelestsqg-int}
Let $S$ be given by  \eqref{Skernelsqg}. If $k_j, m_j \in \Z_*$ with $j = 1, 2, 3, 4$
are defined as above, then there exists a constant $C_2$
such that
\begin{align*}
\left| S(k_1, k_2, k_3, k_4)\right| &\le
C_2\abs{m_3} \abs{m_4}\left[\log(1 + |m_1|) \log(1+ |m_2|)\right]^{1/2}.
\end{align*}
\end{corollary}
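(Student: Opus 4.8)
The plan is to read the Corollary off directly from Lemma~\ref{kernelestsqg}, the only new input being that the $m_j$ are now \emph{nonzero integers} rather than arbitrary nonzero reals, so that $|m_j| \ge 1$ for every $j$. Lemma~\ref{kernelestsqg} already gives
\[
|S(k_1, k_2, k_3, k_4)| \le C_2 \abs{m_3}\abs{m_4} \log\!\left(1 + \abs{\tfrac{m_2}{m_3}}\right),
\]
so it suffices to absorb the single factor $\log(1+|m_2/m_3|)$ into the symmetric expression $[\log(1+|m_1|)\log(1+|m_2|)]^{1/2}$.

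First I would use $|m_3|\ge 1$ to get $|m_2/m_3| \le |m_2|$, hence $\log(1+|m_2/m_3|) \le \log(1+|m_2|)$. Next, the ordering $|m_1|\ge|m_2|$ gives $\log(1+|m_2|)\le \log(1+|m_1|)$; since $|m_1|,|m_2|\ge 1$ both logarithms are bounded below by $\log 2>0$, so $\log(1+|m_2|) = [\log(1+|m_2|)]^{1/2}[\log(1+|m_2|)]^{1/2} \le [\log(1+|m_1|)\log(1+|m_2|)]^{1/2}$. Chaining these two inequalities and substituting into the bound of Lemma~\ref{kernelestsqg} yields
\[
|S(k_1,k_2,k_3,k_4)| \le C_2\abs{m_3}\abs{m_4}\left[\log(1+|m_1|)\log(1+|m_2|)\right]^{1/2},
\]
with the same constant $C_2$ (or one could use the explicit value \eqref{numC2}).

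There is no real obstacle here; the single point worth emphasizing — and the reason this is stated as a Corollary with the integrality hypothesis — is that the argument genuinely relies on the uniform lower bound $|m_3|\ge 1$. In the spatial case $k_j\in\R\setminus\{0\}$ one can have $|m_3|$ arbitrarily small with $|m_1|,|m_2|$ bounded, so $\log(1+|m_2/m_3|)$ need not be controlled by $\log(1+|m_1|)$, and the Corollary fails; I would note this explicitly, consistent with the remark following Lemma~\ref{kernelestsqg}.
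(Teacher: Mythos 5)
Your proposal is correct and is essentially the paper's own proof: the paper disposes of the Corollary in one line by invoking Lemma~\ref{kernelestsqg} together with exactly the two facts you use, namely $|m_3|\ge 1$ and $|m_2|\le|m_1|$. Your write-up just makes the chaining explicit, and your closing remark about the failure in the spatial case matches the paper's own comment preceding the Corollary.
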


\begin{proof}
The result follows immediately from Lemma \ref{kernelestsqg}, since $\abs{m_2} \le \abs{m_1}$ and $\abs{m_3} \geq 1$.
\end{proof}

\end{document}